\documentclass[12pt]{elsarticle}

\usepackage{amssymb}
\usepackage{amsthm}

\usepackage{xcolor}
\usepackage{soul}

\usepackage[colorlinks,
    linkcolor={red!50!black},
    citecolor={blue!50!black},
    urlcolor={blue!80!black}]{hyperref}

\usepackage{graphicx}
\usepackage{tikz-cd}

\newtheorem{theorem}{Theorem}[section]
\newtheorem{lemma}[theorem]{Lemma}
\newtheorem{proposition}[theorem]{Proposition}

\newtheorem{conjecture}[theorem]{Conjecture}

\newtheorem{definition}[theorem]{Definition}

\newcommand{\TryPackage}[3]{\IfFileExists{#1.sty}{\usepackage{#1}#2}{#3}}
\TryPackage{mathrsfs}{\renewcommand{\mathcal}{\mathscr}}{%
     \TryPackage{eucal}{}{}}

\renewcommand{\rho}{\varrho}

\newcommand{\CC}{{\mathbb C}}

\newcommand{\SLC}{{SL(2, {\mathbb C})}}

\begin{document}
\begin{frontmatter}

\title{Weights of essential surfaces in 2-bridge knot complements}



\author[1]{Cynthia L. Curtis\corref{Corresponding author}}
\ead{ccurtis@tcnj.edu}

\author{Kendra Ebke\fnref{Kendra}}
\ead{ebke@wisc.edu}

\author{Kate O'Connor\fnref{Kate}}

\ead{koconnor1127@gmail.com}

\affiliation{organization={Department of Mathematics and Statistics}, addressline={The College of New Jersey}, city={Ewing}, postcode={08628}, state={NJ}, country={USA}}
\cortext[Corresponding author]{Corresponding author}
\fntext[Kendra]{Current Address: Mathematics, University of Wisconsin, Madison, WI}
\fntext[Kate]{Current Address:Ignite Fueling Innovation, Inc, 310 The Bridge St, Suite 450, Huntsville, AL 35806}






\begin{abstract}
Understanding ideal points in the character varieties of knot complements has led to a number of important invariants for 3-manifolds. Ohtsuki (1994) counted the ideal points for character varieties of 2-bridge knot complements, and he made his techniques more concrete in an ensuing paper (1996). Drawing on these ideas, for all 2-bridge knots $K$, we explicitly determine the structure of a Serre tree for each essential surface in the knot complement directly from the knot diagram. Using these trees, we derive a formula for the number of ideal points associated to each incompressible surface.


\end{abstract}

\begin{keyword}
    knot \sep 3-manifold \sep character variety \sep ideal points \sep group actions on trees
\end{keyword}

\end{frontmatter}


\section{Introduction}
Culler and Shalen's seminal work \cite{CS} established the central role played by essential surfaces in the understanding of 3-manifolds. In particular, a number of 3-manifold invariants related to character varieties, which have proved crucial to our understanding of 3-manifolds in recent decades, rely heavily on these essential surfaces, including the seminorms of Culler, Gordon, Luecke, and Shalen \cite{CGLS} (henceforth referred to as CGLS-seminorms); A-polynomials \cite{CCGLS}; and $\SLC$-Casson invariants \cite{C01}, \cite{C03}. The computation of these invariants remains challenging. 

We briefly describe the relationships at the heart of the work of Culler and Shalen. Recall that the character variety of the fundamental group of a 3-manifold is the space of characters of irreducible representations of the fundamental group in $SL_2(\mathbb{C})$. This space is seen to be a variety in a natural way. Moreover, if the character variety is 1-dimensional, then the variety 
is a punctured Riemann surface. The puncture points - i.e. the points of the Riemann surface which do not correspond to characters of representations - are known as \textit{ideal points} of the character variety. Culler and Shalen show that an ideal point of the character variety corresponds to an action of the fundamental group on a certain abstract tree. Further, this action gives rise to a splitting of the fundamental group, and this in turn gives rise to an incompressible surface with boundary in the 3-manifold. This surface is not unique, but its boundary curve is well defined up to isotopy. 

Given the complexity of these constructions and the abstract nature of the trees in question, computations relying on this theory can be difficult. In \cite{O1}, Ohtsuki works through this theory for 3-manifolds which are the complements of 2-bridge knots. In his work, he begins a process of explicitly understanding the geometry of the trees in question, enabling explicit computations. He carried this process further in his later paper \cite{O2}.

The purpose of our paper is to build upon \cite{O2}, showing how concrete trees may be constructed directly from a knot diagram. Each such tree corresponds to a particular incompressible surface in the knot complement, and the number of associated ideal points can be computed directly from the tree
. We hope this process could eventually lead to combinatorial computations for any Montesinos knot. 

In this paper we carry out this project in full for 2-bridge knots, reproducing the main computational result of \cite{O1}. In so doing, we show how to find trees associated to each twist in the standard 2-bridge knot diagram, and we provide an understanding of how these trees combine to form the various trees associated to incompressible surfaces in the knot complement. In addition we show how to count the ideal points associated to a particular surface 
using the tree constructed. 

We note that as Montesinos knots are similarly assembled from twists, this process should be generalizable. To illustrate this,
we conclude the paper by constructing trees which we conjecture are the correct trees corresponding to incompressible surfaces in the complements of all $(p,q,r)$ pretzel knots, where $p$, $q$, and $r$ are odd and greater than 1.

The paper is organized as follows. In Section 2, we review known results concerning essential surfaces in the complements of 2-bridge knots. In Section 3, we offer a brief introduction to Serre trees. In section 4, we describe the trees associated with essential surfaces in 2-bridge knot complements, showing how to generate the various trees for each essential surface directly from a knot diagram. We also see how to count the ideal points corresponding to each essential surface. In Section 5 we show that our work reproduces the result of \cite{O1}. Finally in Section 6, we use this process to generate conjectural trees for an infinite family of pretzel knots.

We note that the research for Sections 4 and 5 was completed by the first and third authors in support of the undergraduate thesis of the third author. All three authors contributed to the development of the trees in Section 6.

\section{Essential surfaces, trees, and weights} 
In this section we review known results on the essential surfaces in two-bridge knot complements, enumerate these in a convenient way, and reframe Ohtsuki's main computational result \cite{O1}, Proposition 5.2.

\subsection{Essential surfaces in two-bridge knot complements}

  A surface $\Sigma$ in a 3-manifold $M$ is said to be \textit{incompressible} if for any disk $D \subset M$ with $D \cap \Sigma = \partial D$, there exists a disk  $D' \subset \Sigma$, with $\partial D' = \partial D$.  A surface $\Sigma$ is $\partial${\em - incompressible} if for each disk $D \subset M$ with $D \cap \Sigma = \partial_+D$ and $D \cap \partial M  = \partial_-D$ there is a disk $D' \subset \Sigma$ with $\partial_+ D' =\partial_+ D$ and $\partial_- D'  \subset \partial \Sigma$. A surface $\Sigma \subset M$ is \textit{essential} if it is both incompressible and $\partial$-incompressible. 
 
 From \cite{HT} we have a good understanding of the essential surfaces in 2-bridge knot complements. Let $K = K(\alpha , \beta)$ be a 2-bridge knot, and let
 
 $$\frac{\beta}{\alpha} = r_0 + \frac{1}{b_1 + \frac{1}{b_2 + \ldots +\frac{1}{b_j}}}$$
 be a continued fraction expansion of $\beta/\alpha$.  Henceforth we denote such a continued fraction expansion by $[b] = [b_1,b_2, \ldots, b_j].$ 
 
Hatcher and Thurston show that any incompressible surface is supported by a branched surface corresponding to a continued fraction expansion of $\beta/\alpha$. Moreover they show that the continued fractions which yield incompressible surfaces are precisely those with $|b_i|\geq 2$ for every $i$.
 
 In \cite{CFLM}, Proposition 3.3, the authors show that the continued fractions corresponding to branched surfaces which support incompressible surfaces may equivalently be enumerated as follows. 
Let $[n]=[n_1,n_2, \ldots, n_k]$ 
 be the unique continued fraction expansion of $\frac{\beta}{\alpha}$ with all $n_i$ 
 positive.  Set   $\epsilon_0=\epsilon_{k+1}=0$. 
\begin{samepage} Let $S$ be the set of $k$-tuples $(\epsilon_1, \epsilon_2, \ldots, \epsilon_k)$ with $\epsilon_i = 0 \mbox{ or } 1$ satisfying both of the following conditions:
\begin{enumerate}
    \item If $\epsilon_i = 1$, then $\epsilon_{i+1}=0$ and $\epsilon_{i-1} = 0$.
    \item If $n_i =1$ and $\epsilon_i=0$ then either $\epsilon_{i-1} = 1$ or $\epsilon_{i+1}=1$.
\end{enumerate} \end{samepage}
 We define
\begin{definition}
 Let $s = (\epsilon_1, \epsilon_2, \ldots, \epsilon_k) \in S$. The \textbf{continued fraction expansion of $\beta/\alpha$ generated by }$s$ is the continued fraction expansion obtained by the following steps:
 \begin{enumerate}
     \item If $\epsilon_j = 1$ and $j>1$, replace $n_j$ with an alternating sequence $-2,2,-2,\ldots,\pm2$ of length $n_j-1$. If $n_j=1$, this means that the term is simply deleted.
     \item If $\epsilon_j=1$, add 1 to each of $n_{j-1}$ and $n_{j+1}$. If $j=1$ then only $n_2$ is adjusted, and if $j=k$ then only $n_{k-1}$ is adjusted.
     \item Adjust the sign(s) of all term(s) replacing $n_i$ in the resulting sequence by multiplying by $\prod_{j<i, \epsilon_j=1} (-1)^{n_j}.$
 \end{enumerate}
 \end{definition}
 In this way, the elements of $S$ generate precisely those continued fraction expansions corresponding to the Hatcher-Thurston branched surfaces supporting incompressible surfaces.

 We now reinterpret this geometrically, using the notion of state surfaces. Each boundary slope of a given 2-bridge knot is the boundary slope of exactly one so-called state surface. State surfaces are constructed as follows: 
 
 Consider the standard 2-bridge diagram for the knot $K$ corresponding to the all-positive continued fraction expansion   $[n_1,n_2, \ldots, n_k].$ Note this is an alternating diagram, as all signs are positive.  We smooth each crossing by replacing the crossing with either horizontal or vertical arcs, as illustrated in Figure~\ref{fig:smoothings}. We thereby obtain circles (``state circles''), which we fill in with disks. These disks are then connected with half-twisted bands corresponding to the crossings of the knot.

\begin{figure}
\begin{center}
\leavevmode\hbox{}
\includegraphics[scale=0.2]{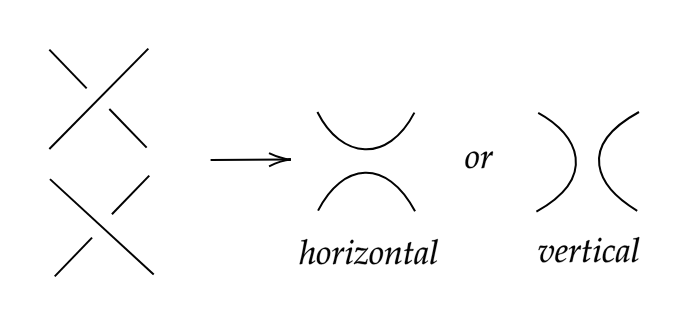}
\caption{Horizontal and Vertical Smoothings} 
\label{fig:smoothings}
\end{center}
\end{figure}

This process builds a surface whose boundary is the 2-bridge knot, called the \textit{state surface} associated with the ``state'' given by a choice of smoothings for each crossing of the given 2-bridge knot. Note that this process yields a finite number of state surfaces associated with a given 2-bridge knot. It is immediately clear from this construction and from that of \cite{HT} that each Hatcher-Thurston branched surface corresponds to a state surface.

An example of a state surface associated with the $[5,4,3,6]$ 2-bridge knot is shown in Figure~\ref{fig:surfaceexample}. Here the largest disc with boundary the outer circle should be understood to lie at a level below the discs nested within that circle.

\begin{figure}
\centering
\includegraphics[width=9cm, height=4cm]{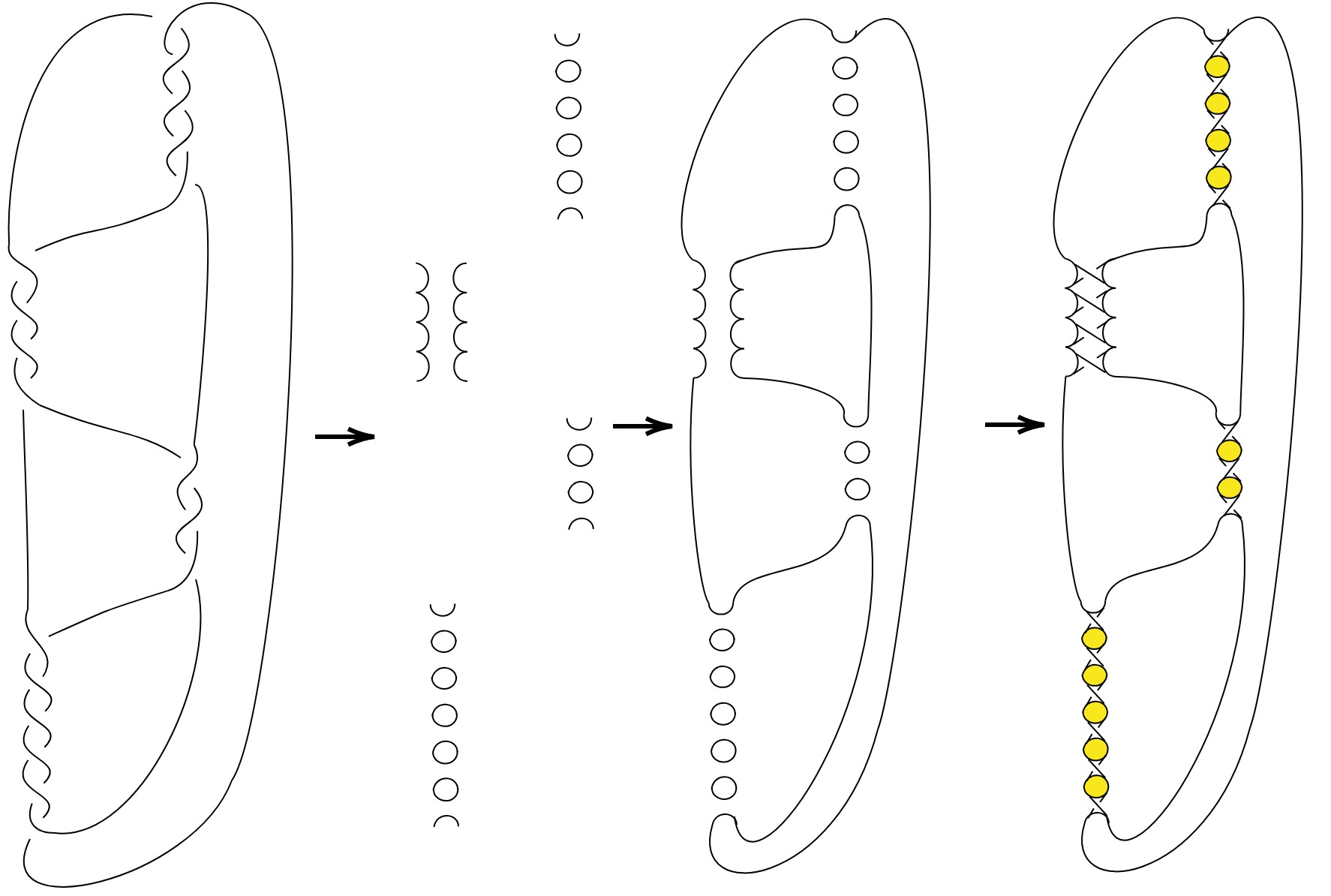}
\caption{A state surface of the $[5,4,3,6]$ knot, where all of the crossings in the first, second, and fourth twists are smoothed horizontally, and all of the crossings in the third twist are smoothed vertically.}
\label{fig:surfaceexample}
\end{figure}

Now the incompressibility of state surfaces has been studied by Ozawa in \cite{Oz}. In particular, if a crossing of the knot joins a state circle to itself in the above process, then the surface constructed is compressible. It is immediately clear from this that any state leading to an incompressible state surface must have all crossings within each twist smoothed in the same way. Thus, to identify all incompressible state surfaces, we need only choose a smoothing for each twist. If we let 0 denote the horizontal smoothing and 1 the vertical smoothing, we see the choices of smoothings are $k$-tuples of 0's and 1's. The elements of $S$ are precisely the $k$-tuples yielding incompressible surfaces. Restating the conditions on $S$ geometrically, we obtain 
\begin{lemma}
A state surface of a 2-bridge knot is essential if and only if the following conditions on the smoothings hold.

\begin{enumerate}
    \item All of the crossings in an individual twist are smoothed the same way (either horizontally or vertically).
    \item Two adjacent twists are not both smoothed vertically.
    \item If a twist has a single crossing, either that crossing or an adjacent twist is smoothed vertically.
\end{enumerate}
\end{lemma}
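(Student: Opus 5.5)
The plan is to reduce the lemma to the combinatorial characterization already recorded: by \cite{HT} together with \cite{CFLM}, Proposition 3.3, the branched surfaces carrying essential surfaces are exactly those given by continued fraction expansions generated by some $s\in S$. What remains is to match state surfaces with Hatcher--Thurston branched surfaces, and then to translate the two defining conditions on $S$ into conditions on smoothings.

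The argument runs in three steps.

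\textbf{Condition 1.} If the crossings within one twist are not all smoothed the same way, then two consecutive crossings of that twist are smoothed differently. Look locally at two adjacent crossings in a twist region, one smoothed horizontally and one vertically. The small bigon state circle between them is then absent, and one of the crossings has both of its ends on the same state circle. By Ozawa \cite{Oz}, a crossing joining a state circle to itself gives a compressible surface. So essential state surfaces satisfy Condition 1. Under Condition 1 a state is the same thing as a $k$-tuple $(\epsilon_1,\ldots,\epsilon_k)$, with $\epsilon_i=1$ for vertical.

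\textbf{Matching with branched surfaces.} Next we check that the state surface for $(\epsilon_i)$ is the surface carried by the branched surface of the expansion generated by $s$ in the Definition.
\begin{itemize}
\item If twist $j$ is smoothed vertically, its $n_j$ crossings produce $n_j-1$ bigon disks joined by half-twisted bands. This is the alternating $\pm2$ plumbing of length $n_j-1$.
\item The two outer vertical arcs merge into the neighboring state circles. This contributes the extra half twist, i.e.\ the $+1$ added to $n_{j\pm1}$.
\item Each vertical twist changes the orientation convention of the diagram by $(-1)^{n_j}$. This accounts for the sign adjustment in step 3.
\end{itemize}
This check is done by a local picture of a single twist, and it is essentially the remark made just before the lemma.

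\textbf{Translating $S$.} Given the matching, Condition 2 of the lemma is condition 1 of $S$. Condition 3 of the lemma is condition 2 of $S$. Geometrically, two adjacent vertical twists would create a crossing whose two ends lie on one state circle. A horizontal single-crossing twist with horizontal neighbors gives a term of absolute value $1$, which Hatcher--Thurston exclude.

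The main obstacle is the matching step: verifying rigorously that the state surface is isotopic to a surface carried by the corresponding Hatcher--Thurston branched surface, including the sign bookkeeping. I would first carry this out for a single twist with its neighbors. I would then argue inductively along the diagram, since the 2-bridge diagram is a linear concatenation of twists and the plumbing description is local.
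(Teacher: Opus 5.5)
Your overall route is the paper's. Ozawa's criterion gives Condition 1, and Conditions 2 and 3 are read off from the Hatcher--Thurston/CFLM description of the allowable expansions by the set $S$; the paper simply declares the correspondence between state surfaces and Hatcher--Thurston branched surfaces ``immediately clear.'' Your Condition 1 argument is correct.

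The gap is in the matching step, which you rightly call the crux: your local model is backwards. A chain of $n_j-1$ bigon disks joined in series by $n_j$ half-twisted bands is just one band with $n_j$ half twists running between the two neighboring state circles.
\begin{itemize}
\item In Hatcher--Thurston terms that chain is the single term $n_j$ (plus the adjustments from vertical neighbors), i.e.\ the case $\epsilon_j=0$. The loop through it is the word $a_1a_2\cdots a_n$ that the paper uses for horizontally smoothed twists.
\item The alternating string of $\pm2$'s of length $n_j-1$ comes from the other smoothing: two long arcs joined by $n_j$ parallel half-twisted bands. Consecutive bands cobound a Hopf band, so this piece is a linear plumbing of $n_j-1$ Hopf bands. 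It also carries the loops $a_j^{-1}a_i$ of the paper's vertical case.
\end{itemize}
So $\epsilon_j=1$ must be the parallel-band smoothing, not the bigon chain.

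This is not a harmless relabeling, because the lemma is not symmetric in the two smoothings, and your later steps silently use the other reading.
\begin{itemize}
\item Under your reading, take two adjacent vertical twists with at least two crossings each. No crossing in them joins a state circle to itself, since every crossing meets a bigon circle. So your Ozawa justification of Condition 2 fails.
\item Under your reading the lemma itself would be false. For $k\ge 2$, the all-bigon state is essential whenever all $n_i\ge 2$, yet it would count as ``all vertical'' and violate Condition 2.
\item Under the parallel-band reading your justification works, because the long arcs of adjacent vertical twists fuse. For the figure-eight $[2,2]$, the $(1,1)$ state has a single state circle, and all four bands join it to itself.
\item Your ``term of absolute value $1$'' remark for Condition 3 also presupposes that a horizontal twist is a single band with $n_i$ half twists.
\end{itemize}
Fix the local model as follows. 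A vertical twist gives $n_j$ parallel bands, which is $n_j-1$ plumbed Hopf bands and accounts for the $+1$ on each horizontal neighbor. A horizontal twist gives a bigon chain, which is one band with $n_j$ half twists. With that correction your plan goes through exactly as in the paper.
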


\begin{definition} We call the elements of $S$ the \textbf{allowable smoothings} of the alternating diagram corresponding to the all-positive continued fraction expansion of $\frac{\alpha}{\beta}$.
\end{definition}

Note that the state surface of the $[5,4,3,6]$ knot given above corresponds to the smoothing (0,1,0,0), which is an allowable smoothing. Thus the surface is an essential surface of the knot.

\subsection{Ohtsuki's Theorem}
We next turn our attention to the CGLS-seminorm. The CGLS-seminorm is a seminorm on the vector space $V$ spanned 
by  $\mathbb{Z}\times \mathbb{Z} = H_1(\partial M; \mathbb{Z})$, where $M$ is a 3-manifold with boundary a torus. 

Intuitively, if the character variety of $M$ is 1-dimensional, this seminorm gives a count of the  characters in the (0-dimensional) character variety of the closed 3-manifold obtained by $p/q$-Dehn surgery. More precisely, let $\textit{X}$ be the character variety of $M$. If $m$ and $\ell$ are a meridian and longitude, we define $I_{p,q}:X \rightarrow \mathbb{C}$ to be the function taking the character of a representation $\rho$  to the trace of $\rho(m^p\ell^q)$. The CGLS-seminorm on $V$ takes $(p,q)$ to the degree of $I_{p,q}$.

In \cite{O1}, Proposition 5.2, Ohstuki computes the seminorm for $M$, where $M$ is the complement of a 2-bridge knot. Let $K(\alpha,\beta)$ be a 2-bridge knot. Let $[m] = [m_1,m_2,...m_j]$ be a continued fraction expansion of $\beta/\alpha$ corresponding to an incompressible surface. Let $N_{[m]}$ be the boundary slope of the associated incompressible surface. Note since $K(\alpha,\beta)$ is a 2-bridge knot, $N_{[m]}$ is an integer. Ohtuski tells us that the
$||(p,q)|| = \sum_{[m]} |p-N_{[m]} q| \cdot \big[ \frac{1}{2} \prod_i (|m_i|-1) \big],$ 
where the sum is taken over all continued fraction expansions $[m]$ corresponding to incompressible surfaces (\cite{O1}, Proposition 5.2). However, as noted in \cite{Ma}, p. 18 and \cite{BC12}, p. 12, this formula contains a minor error. Ohtsuki himself notes in section 4 of his paper that the correct coefficient of $|p|$ is $-\frac{1}{2} +\frac{1}{2} \prod_i (|m_i| - 1)$. Thus, the correct formula for the seminorm is 
$$||(p,q)|| = \frac{1}{2}(-|p| + \sum_{[m]} |p-N_{[m]} q| \prod_i (|m_i|-1) ).$$

Observe that $|p-N_{[m]} q|$ is the geometric intersection of the boundary of the incompressible surface with the curve $pm + q\ell$. Thus the CGLS-seminorm of $(p,q)$ is a weighted sum of these intersection numbers, where the sum is taken over all incompressible surfaces in the knot complement. This fact holds true for CGLS-seminorms of knot complements broadly, not only for 2-bridge knot complements. 

Note that the set of boundary slopes for a 2-bridge knot, and in fact for any Montesinos knot, is well understood and may be computed combinatorially. Thus to understand the CGLS-seminorm and to produce combinatorial computations for the seminorms, we must better understand the weights associated to each surface.
Equivalently, we wish to easily count the number of ideal points associated to each incompressible surface in the knot complement. Ohtsuki has accomplished this for 2-bridge knots.

We reframe Ohtuski's result in terms of the set $S$ of allowable smoothings.

\begin{proposition}

\label{OhtsukiSmoothings}
The number of ideal points of the character variety associated with the surface given by smoothing $(\epsilon_1, \epsilon_2, \ldots, \epsilon_k)$ is

\[\frac{1}{2}\left(\gamma + \prod_{i=1}^{k} \delta_i\right)\]

where $\delta_i$ is given by

 \[ \delta_i  = \left\{ \begin{array}{ll}
      n_i - 1 + \epsilon_{i-1} + \epsilon_{i+1} & \mbox{ if  $\epsilon_i = 0$}\\
      1 & \mbox{ if $\epsilon_i = 1$},
   \end{array} \right.
\]

and $\gamma$ is given by

\[ \gamma =  \left\{ \begin{array}{rl}
      -1 & \mbox{ if the surface is orientable}\\
      0 & \mbox{ if the surface is non-orientable}
   \end{array} \right.
\]

\end{proposition}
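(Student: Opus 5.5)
The plan is to deduce the statement directly from Ohtsuki's per-surface count (\cite{O1}, corrected as in \cite{Ma}, \cite{BC12}). That count says the incompressible surface carried by the continued fraction $[m]=[m_1,\ldots,m_j]$ contributes $\frac{1}{2}\prod_i(|m_i|-1)$ ideal points, with an extra $-\frac{1}{2}$ in exactly one case. By \cite{CFLM}, Proposition 3.3, these expansions are exactly those generated by elements $s=(\epsilon_1,\ldots,\epsilon_k)\in S$. So it suffices to establish two facts for the expansion generated by a given $s$:
\[\prod_i(|m_i|-1)=\prod_{i=1}^k\delta_i,\]
and the correction term $-\frac{1}{2}$ attaches exactly to the orientable state surface. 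I would proceed in three steps.

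\emph{Step 1: the product.} Follow the three steps of the definition of the expansion generated by $s$, tracking absolute values; step 3 changes only signs, so it can be ignored.
\begin{itemize}
\item Take an index $i$ with $\epsilon_i=0$. Then $\epsilon_{i\pm1}$ are the only adjustments touching $n_i$, so the corresponding term has absolute value $n_i+\epsilon_{i-1}+\epsilon_{i+1}$. It therefore contributes the factor $n_i-1+\epsilon_{i-1}+\epsilon_{i+1}=\delta_i$.
\item Take an index $j>1$ with $\epsilon_j=1$. Here $n_j$ is replaced by $n_j-1$ terms equal to $\pm2$, each contributing a factor $1$, so the total factor is $1=\delta_j$. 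This includes the case $n_j=1$, where the term is deleted and the empty product is $1$.
\item Take $\epsilon_1=1$. Then no terms replace $n_1$; it is absorbed into the integer part $r_0$, which does not enter the product. Again the factor is $1=\delta_1$.
\end{itemize}
Condition 1 of $S$ guarantees that no term is adjusted by a neighbour that has itself been replaced, so these cases do not interfere. Multiplying the factors gives $\prod_i(|m_i|-1)=\prod_i\delta_i$. I would check this bookkeeping against the $[5,4,3,6]$ example with $s=(0,1,0,0)$.

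\emph{Step 2: the correction term.} In Ohtsuki's formula the coefficient of $|p|$ is $-\frac{1}{2}+\frac{1}{2}\prod(|m_i|-1)$. Hence the $-\frac{1}{2}$ belongs to the surface of boundary slope $N_{[m]}=0$ whose count, in Ohtsuki's Section 4, is reduced by one. I would show this surface is the Seifert surface, which is the unique orientable state surface. The key point is that the orientable state surface is the one in which each twist is smoothed consistently with a knot orientation, and its boundary is the longitude, so its slope is $0$. For every other allowable $s$ I would use the Hatcher--Thurston slope formula, which gives the slope as twice a signed count of crossings relative to the Seifert state, together with Ohtsuki's description of where the missing ideal point goes. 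The aim is to show that the extra $-\frac{1}{2}$ does not occur for any non-orientable surface, so that $\gamma=0$ there.

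\emph{Main obstacle.} I expect Step 2 to be the hardest. Ohtsuki's formula records the correction as a global coefficient of $|p|$, not as an attribution to a particular surface. Pinning it to the orientable surface therefore requires going back into his argument. I would first try to show that the corrected point is an ideal point over the abelian/reducible locus, at $\mu^2=1$ on the slope-$0$ component. Such points arise only from the surface dual to $H^1(M;\mathbb{Z})$, i.e. the Seifert surface. I would then confirm that no non-orientable surface of the same slope picks up this correction.
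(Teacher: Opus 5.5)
Your proposal has the same two-step structure as the paper's proof. Step 1 is the paper's bookkeeping essentially verbatim. One slip there: for $\epsilon_1=1$ the term $n_1$ does not vanish into $r_0$. It is still replaced by $n_1-1$ terms $\pm2$, with $r_0$ shifted; for example $[2,2]$ with $s=(1,0)$ gives $1+[-2,3]$. The factor is $1$ either way, so $\delta_1=1$ stands.

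Step 2 is where you go further than the paper. The paper settles it in one line: $|p-N_{[m]}q|=|p|$ exactly when $N_{[m]}=0$, so the $-\frac12$ goes to the slope-$0$ surface, which it identifies with the Seifert surface. Your doubt about this is well founded, because slope $0$ need not belong only to the orientable surface. For $[2,2,2,2]$ (the knot $8_{12}$), the non-orientable state $(1,0,0,1)$ has expansion $1+[-2,3,3,-2]$ and also has slope $0$. The global coefficient of $|p|$ then cannot say which of the two slope-$0$ surfaces absorbs the correction.

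The mechanism you propose is a spurious reducible action through $H_1(M)$, dual to the Seifert surface. The paper itself invokes this mechanism later, in Section 5: the all-$\pi$ angle choice is the only reducible action, and it exists only when every valence is even, i.e.\ for the all-even expansion. So your route gives a genuine per-surface attribution, at the cost of re-entering Ohtsuki's Section 4 or the tree count.

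Once you know that each surface's count has the form $\frac12(\gamma+\prod_i\delta_i)$ with $\gamma\in\{0,-1\}$, integrality finishes the argument. Indeed $\prod_i\delta_i=\prod_i(|m_i|-1)$ is odd exactly when every $|m_i|$ is even, i.e.\ exactly for the orientable surface.

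Drop the ``$\mu^2=1$'' localization, though. At any slope-$0$ ideal point the meridian trace blows up. The spurious action is instead the reducible one with an invariant line, where the longitude acts trivially and the meridian eigenvalue tends to $0$ or $\infty$.
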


\begin{proof}
    The smoothing $(\epsilon_1, \epsilon_2, \ldots, \epsilon_k)$ generates a continued fraction expansion $[m]$ from the all-positive continued fraction expansion $[n]=[n_1,n_2,\ldots, n_k]$ according to the  replacement steps given in Definition 2.1. 
    By Ohtsuki's formula, the weight for the corresponding surface is $ \frac{1}{2} \prod (|m_i|-1)$ if the boundary slope is not 0, and the weight of the surface with boundary slope 0 is $ \frac{1}{2}(-1 + \prod (|m_i|-1))$.

    Now note that as we generate $[m]$ from $[n]$,  if $\epsilon_i = 1$ then $n_i$ is replaced by a sequence of terms $\pm 2$. These terms each contribute a factor of 1 to the product in Ohtsuki's formula. We see too that $\delta_i = 1$ for such $i$.
    
    Now if $\epsilon_1 = 0$ then $n_i$ is replaced by a single term of $[m]$. Let $d_i$ denote the term in $[m]$ corresponding to $n_i$, where $\epsilon_i = 0$. The product in Ohtuski's formula is then $\prod(|d_i|-1).$
    Checking the replacement steps in Definition 2.1, we see $d_i$ is equal to $n_i$ if both $\epsilon_{i-1}$ and $\epsilon_{i+1}$ are 0, is equal to $n_i+1$ if exactly one of $\epsilon_{i-1}$ and $\epsilon_{i+1}$ is 1, and is equal to $n_i+2$ if both $\epsilon_{i-1}$ and $\epsilon_{i+1}$ are 1. In other words, the contribution of $d_i$ to the product in Ohtsuki's formula is $n_i + \epsilon_{i-1} + \epsilon_{i+1} - 1$. Then in each case $\delta_i = |d_i|-1.$
    
    Finally noting that $|p-N_{[m]}q| = |p|$ precisely when the boundary slope $N_{[m]}$ is 0, we see the correction to Ohtsuki's formula applies when the surface in question is a Seifert surface for the knot. This occurs when the surface is orientable, and $\gamma$ provides the needed correction to the coefficient of $|p|$ in this case.

    \end{proof}

\begin{definition}
We call the number of ideal points associated with the surface with smoothing $(\epsilon_1, \epsilon_2, \ldots, \epsilon_k)$ the \textbf{weight} of the surface.
    
\end{definition}

Thus, by Proposition~\ref{OhtsukiSmoothings} the weight of a surface corresponding to $(\epsilon_1, \epsilon_2, \ldots, \epsilon_k)$ is given by
\[\frac{1}{2}\left(\gamma + \prod_{i=1}^{k} \delta_i\right).\]

\section{Serre trees}

In this section we briefly summarize the results needed from \cite{CS} and \cite{O2}. In \cite{CS}, Section 2, Culler and Shalen introduce three key theorems. Together, Theorems 2.1.2, 2.2.1, and 2.3.1 tell us the following: given an ideal point, the fundamental group of the 3-manifold is isomorphic to the fundamental group of a graph of groups arising from an action of the fundamental group on a certain tree, known as a Serre tree. This isomorphism gives a splitting of the fundamental group, and in fact there is a family of incompressible surfaces within the three manifold such that the fundamental group of each surface is contained in an edge group.

To make this a bit more concrete: this tells us that each ideal point in the character variety gives rise to an action of the fundamental group on a tree. Moreover there is an associated  family of incompressible surfaces in the 3-manifold, and the fundamental group of the 3-manifold acts on the tree in such a way that the surface group stabilizes an edge of the tree.  

We refer to the tree on which the group acts corresponding to a given ideal point as the Serre tree for the ideal point. Formal definitions for Serre trees may be found in section 2 of \cite{CS}. The definition is abstract, and we will identify Serre trees concretely in the remainder of the paper without reference to this abstract definition.

In \cite{O1} and \cite{O2}, Ohtsuki works to identify these trees concretely. It is this approach which we will develop here.

Ohtsuki gives an alternative definition of Serre's tree in Definition 3.1 of \cite{O2} using formal power series. One sees that each generator of the knot group is associated with an axis in the tree such that multiplication by the generator acts on the axis  by translation. In particular the axis is invariant under multiplication by the generator. Following \cite{O2}, for the remainder of the paper, we describe the trees in question by drawing only the axes associated to a set of generators. These axes completely determine the Serre tree as described in \cite{O2}.

Also apparent from this definition is that there is a special vertex in the tree, which we call the origin, which lies on all axes, and which will be useful for us later on. In fact it is part of the edge of the tree which is stabilized by the action of the surface group corresponding to the ideal point.

Ohtuski proves the following lemma concerning these axes (Lemma 1.5 of \cite{O2}). This will be our primary tool throughout the next section.  We have rephrased this slightly.

\begin{lemma}
\label{minimalsubtrees}
Given a relation $z = xyx^{-1}$ in the knot group, the  subtrees shown in Figure~\ref{fig:minimalsubtrees} containing the six ends $x_{\pm}, y_{\pm}, z_{\pm}$ of the axes associated with $x$, $y$, and $z$ must be contained in any tree associated with the knot. 

\begin{figure}
\centering

\includegraphics[width=7cm, height=2cm]{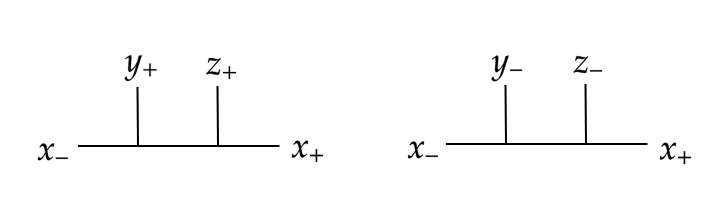}
\caption{The subtrees obtained from the relation $z = xyx^{-1}$.}
\label{fig:minimalsubtrees}
\end{figure}
\end{lemma}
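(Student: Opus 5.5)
We prove this using only basic facts about isometries of a simplicial tree $T$ on which the knot group acts without inversions. Each meridian generator acts hyperbolically at the ideal point, since its trace tends to infinity. It therefore has an invariant axis and a translation length. The relation $z = xyx^{-1}$ gives $\mathrm{Axis}(z) = x\cdot\mathrm{Axis}(y)$, with the same translation length as $y$. On ends this means $z_\pm = x(y_\pm)$, where $x$ acts on the space of ends of $T$.

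\textbf{Step 1: the configurations.} Consider the three axes through the origin $O$, which lies on all of them. Split into cases according to how $\mathrm{Axis}(x)$ and $\mathrm{Axis}(y)$ meet, bearing in mind that they share at least $O$:
\begin{enumerate}
\item[(a)] they intersect in a compact segment $[P,Q]$, possibly a single point;
\item[(b)] they share one end;
\item[(c)] they coincide.
\end{enumerate}
Case (c) is ruled out: it would make $x$ and $y$ commute up to the stabilizer of the common axis, which contradicts irreducibility at the ideal point. Case (b) should similarly be excluded or folded into (a) using the structure of Ohtsuki's power-series tree, Definition 3.1 of \cite{O2}.

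\textbf{Step 2: locating the axis of $z$.} In case (a), translate the segment $[P,Q]$ by $x$ along $\mathrm{Axis}(x)$. Write $\ell(x)$ for the translation length of $x$ and $\ell_{xy}$ for the length of $[P,Q]$. If $\ell(x) > \ell_{xy}$, the image $x[P,Q]$ is disjoint from $[P,Q]$ and lies further along $\mathrm{Axis}(x)$ toward $x_+$. The branches of $\mathrm{Axis}(y)$ leaving $[P,Q]$ are carried to branches leaving $x[P,Q]$, and these branches have ends $z_\pm$. The union of the three axes is then the tree displayed in Figure~\ref{fig:minimalsubtrees}. If instead $\ell(x) \le \ell_{xy}$, the images overlap, and the figure's alternative configuration arises. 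The orientation of the branches ($z_+$ versus $z_-$) is tracked by checking whether $x$ preserves or reverses the direction in which $y$ translates along the overlap.

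\textbf{Step 3: the subtrees must appear.} Any tree associated with the knot contains the axes of all generators. By Step 2, the convex hull of the six ends $x_\pm, y_\pm, z_\pm$ is forced to be one of the pictured subtrees, so that subtree is contained in the tree.

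The main obstacle is the case analysis in Step 2. One must verify the relative positions of the branch points precisely, including ruling out the degenerate overlap cases. To do this I would work directly with Ohtsuki's explicit description, in which $O$ is the class of the standard lattice $\CC[[t]]^2$. I would compute the valuations of the matrix entries of $\rho(x)$ and $\rho(y)$ to obtain the distances from $O$ to the branch points, and check that these agree with the figure.
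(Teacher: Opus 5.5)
\textbf{Where the argument fails.} Your Step~2 breaks down in its main case, and Step~3 claims something the relation does not give. Orient $[P,Q]$ from $x_-$ toward $x_+$. If $\ell(x)>\ell_{xy}$, then $[P,Q]$ and $x[P,Q]$ are disjoint subsegments of $\mathrm{Axis}(x)$. Off the axis, $\mathrm{Axis}(y)$ lies in the components of $T\setminus\mathrm{Axis}(x)$ attached at $P,Q$, and $\mathrm{Axis}(z)=x\,\mathrm{Axis}(y)$ lies in those attached at $xP,xQ$. Hence $\mathrm{Axis}(y)\cap\mathrm{Axis}(z)=\emptyset$. This contradicts the fact, which you use in Step~1, that $O$ lies on all three axes. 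So in fact $\ell_{xy}\ge\ell(x)$ always, with $O\in[xP,Q]$.

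Even in that case the convex hull of $x_\pm,y_\pm,z_\pm$ is \emph{not} determined. It depends on whether $y$ translates along $[P,Q]$ in the same direction as $x$, and on $\ell_{xy}$, and both options occur in the paper:
\begin{itemize}
\item In the linear subtree the twist generators translate coherently along a long common segment through $O$. This is why $a_j^{-1}a_i$ fixes $O$ in the proof of Proposition~\ref{twistsubtrees}.
\item In the parasol subtree they do not.
\end{itemize}
Yet the paper states that \emph{each} minimal subtree of a twist lies in \emph{both} composite trees of Proposition~\ref{Subtree}, and that every crossing contributes \emph{two} minimal subtrees to every tree. So the lemma is not a choice between two possible hulls. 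No valuation bookkeeping in Ohtsuki's model can single out one hull from the relation alone, since both are realized.

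\textbf{The missing idea.} The lemma records only the part of the picture that is independent of these choices, and that part is formal. (The paper simply quotes it as Ohtsuki's Lemma~1.5.) For an end $e\neq x_\pm$, let $v(e)$ be the point where the ray from $\mathrm{Axis}(x)$ to $e$ leaves the axis. Since $x$ preserves $\mathrm{Axis}(x)$ and translates along it by $\ell(x)$, you have $v(xe)=x\,v(e)$. This is the point one translation length beyond $v(e)$ toward $x_+$: one unit in Ohtsuki's normalization, two in this paper's.

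Apply this to $e=y_+$ and to $e=y_-$, using $z_\pm=x(y_\pm)$. For each sign you get the subtree spanned by $x_-,x_+,y_\pm,z_\pm$: the line $\mathrm{Axis}(x)$, with the $y_\pm$-ray leaving at one vertex and the $z_\pm$-ray leaving one translation length further toward $x_+$. These two subtrees are present however $\mathrm{Axis}(x)$ and $\mathrm{Axis}(y)$ overlap, which is exactly why they sit inside both the parasol and the linear trees.

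The only non-formal inputs are that $x$ is hyperbolic and that $\{y_+,y_-\}\cap\{x_+,x_-\}=\emptyset$. Without the second, the picture collapses: if, say, $y_+=x_+$, then $z_+=y_+$. This is precisely your Cases~(b) and~(c), which you only sketch; Case~(b) is left at ``should similarly be excluded''. It is the one place where Ohtsuki's explicit model, or an irreducibility argument, is genuinely needed.
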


\begin{definition} Given a set of generators and relations for the knot group, we call the trees given by Lemma~\ref{minimalsubtrees} \textbf{minimal subtrees} for the presentation. 
    
\end{definition}

In \cite{O2}, the author's approach is to use his lemma to ``guess'' the structure of the Serre trees corresponding to ideal points for a given knot. He then shows these are in fact Serre trees by identifying families of representations leading to an ideal point which realizes the tree.

We take a slightly different approach. We use Ohtsuki's lemma to find the structure of all candidate trees which could arise as Serre trees corresponding to ideal points of the character variety for each 2-bridge knot. We do this by determining the configuration of axes for each tree directly from the 2-bridge knot diagram. 

For each candidate tree, we then identify a special segment which will be fixed by the surface group of the incompressible surface corresponding to the ideal point. We determine which surface group fixes the segment with the given action on the axes, thereby identifying the incompressible surface corresponding to the tree. We remark that for two-bridge knots, every candidate tree arising from the minimal subtrees for the knot is a Serre tree.

In summary, in what follows, for each incompressible surface in the knot complement, we find the axes corresponding to a set of generators for all trees on which the knot group acts, with the caveat that the fundamental group of the surface lies in an edge stabilizer. In fact, for each surface we will find a maximal such tree, with other trees and actions arising from this maximal one. Then the weight of the surface is the number of such actions on this maximal tree.

\section{Trees and weights}

We now turn our attention to our main goal of building a tree for each incompressible surface in a 2-bridge knot complement directly from a knot diagram. We then use these trees to find the weights for each surface.

Given a 2-bridge knot $K$, consider the diagram for $K$ corresponding to the all-positive continued fraction expansion $[n]$ for $K$. We describe the fundamental group of the knot complement using the Wirtinger presentation for this knot diagram.

We proceed as follows: In Section 4.0.1, we find all possible subtrees associated to a twist in a knot diagram, and in Section 4.0.2, we determine which tree is associated with which smoothing of the twist. In Section 4.1 we determine the structure of the tree associated to the state surface given by the all-horizontal smoothing of the alternating diagram given by the all-positive continued fraction expansion (assuming this smoothing is allowable), and we determine the weight for the surface in 4.1.2. In Section 4.2 we complete the same work for the remaining allowable smoothings of the alternating diagram.

\subsubsection{Subtrees corresponding to twists in the knot diagram}

To determine the shape of the trees associated to a particular incompressible surface, we must find all ways to fit together the minimal subtrees obtained from each of the relations in the knot group. To simplify this task, we first fit the minimal subtrees obtained from the crossings of an individual twist, obtaining subtrees corresponding to the twist. 

Consider the open twist illustrated in Figure~\ref{fig:opentwist}.
\begin{figure}
\centering

\includegraphics[width=2cm, height=4cm]{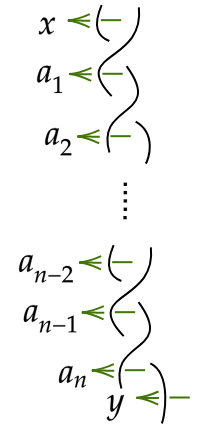}
\caption{A labeled open twist.}
\label{fig:opentwist}
\end{figure}
From the open twist in Figure~\ref{fig:opentwist}, we obtain the $2n$ minimal subtrees shown in Figure~\ref{fig:opentwistminimalsubtreees}. 

\begin{figure}
\centering

\includegraphics[width=10cm, height=3cm]{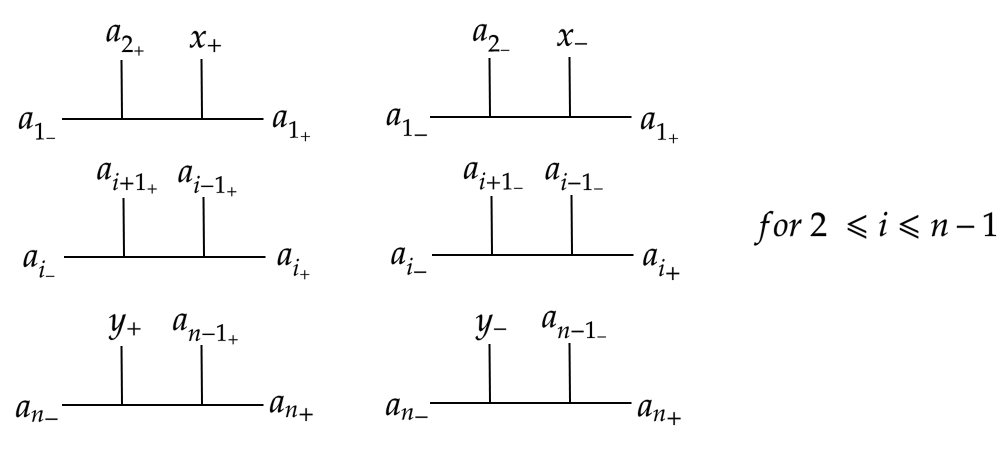}
\caption{Minimal subtrees obtained from an open twist.}
\label{fig:opentwistminimalsubtreees}
\end{figure}

Now consider the two trees shown in Figure~\ref{fig:opentwistsubtreees}. It is straighforward to check that each of the minimal subtrees from Figure~\ref{fig:opentwistminimalsubtreees} is contained in each of these composite trees. 

Looking at the parasol tree shown,  we may generate additional composite trees by identifying two or more branches in the parasol. This will create a new parasol with fewer branches and with more axes along each branch. Note that the resulting tree might or might not contain all of the minimial subtrees from Figure~\ref{fig:opentwistsubtreees}, as one or more of the subtrees could be collapsed in the glued parasol.

\begin{definition}
  A \textbf{quotient} of a tree $T$ is a tree  $T'$ obtained from $T$ by an identification of some branches in $T$. A composite tree arising from a set of subtrees is \textbf{maximal} for a set of minimal subtrees if it contains all of the minimal subtrees and it is not a proper quotient of another tree containing the given subtrees.
\end{definition}

\begin{proposition}
\label{Subtree}
There are exactly two maximal ways to fit together the minimal subtrees obtained from the open twist in Figure~\ref{fig:opentwist}. These are given in Figure~\ref{fig:opentwistsubtreees}.
\begin{figure}
\centering

\includegraphics[width=15cm, height=5cm]{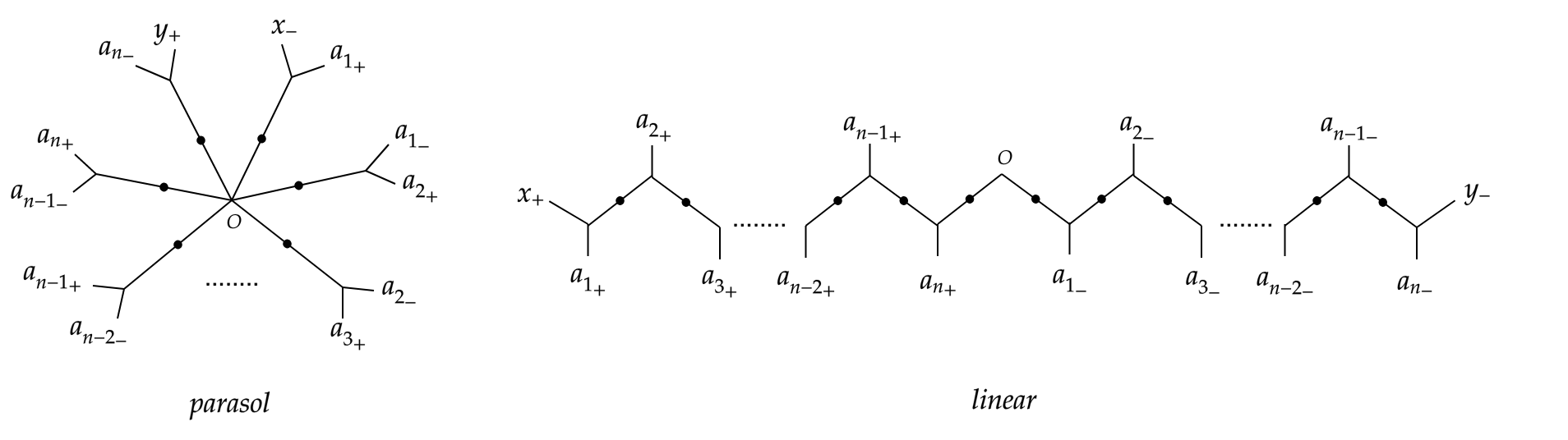}
\caption{Maximal subtrees associated with an open twist.}
\label{fig:opentwistsubtreees}
\end{figure}

\end{proposition}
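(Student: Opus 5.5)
We need to classify all maximal trees containing the $2n$ minimal subtrees of Figure~\ref{fig:opentwistminimalsubtreees}. Label the arcs of the open twist so that the Wirtinger relations read $x_{i+1} = x_i x_{i-1} x_i^{-1}$ (alternately on the two strands), $i=1,\ldots,2n$, so that consecutive generators are linked by the relation of Lemma~\ref{minimalsubtrees}. The plan is an induction along the twist. At each crossing we decide how the six ends of the three axes involved are arranged, and show that only two arrangements can be propagated consistently.

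\textbf{Step 1.} Start with the first crossing. Its minimal subtree, from Lemma~\ref{minimalsubtrees} and Figure~\ref{fig:minimalsubtrees}, fixes the relative position of the ends $x_{0,\pm}, x_{1,\pm}, x_{2,\pm}$ up to one binary choice. Either the axes of $x_0$ and $x_2$ share an end through the origin, or their ends branch off at the origin in distinct directions. The first choice forces a linear arrangement in which all axes overlap along a common line (the ``line'' tree). The second forces a branching at a common vertex (the ``parasol'' tree).

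\textbf{Step 2.} Show that this choice propagates. The next relation involves $x_1, x_2, x_3$, and two of its three axes are already placed. So Lemma~\ref{minimalsubtrees} leaves essentially one way to place the ends of the new axis if we are to avoid collapsing previously distinct branches.
\begin{itemize}
\item In the line case, every new axis lies along the same line. This gives the first tree of Figure~\ref{fig:opentwistsubtreees}.
\item In the parasol case, each new generator contributes a new branch (a new $x_{i,+}$ end) emanating from the common vertex, while the other ends are shared. Inductively we obtain the parasol with the maximal number of branches.
\end{itemize}
Then verify directly, as the text already notes, that each of the $2n$ minimal subtrees is contained in each tree.

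\textbf{Step 3.} Prove maximality and exhaustiveness. Any tree containing all the minimal subtrees must, at the first crossing, make one of the two choices. By the forcing argument in Step 2, the tree then restricts on the union of the axes to a quotient of the line tree or of the parasol tree. Identifying branches is the only freedom the lemma leaves, and that is exactly the definition of a quotient. Moreover, neither of the two trees is a quotient of the other, since the line has no branching vertex and the parasol has one. Hence these are the only maximal trees.

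The main obstacle is Step 2's forcing argument. One must argue carefully that at each crossing the ends of the new axis cannot be placed in some third configuration, for instance a branch point away from the origin. I would handle this using the fact that all axes pass through the origin and that conjugation by $x_i$ translates along its axis. This translation pins down where the image axis $x_i x_{i-1} x_i^{-1}$ must lie relative to the existing configuration.
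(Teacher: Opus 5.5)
Your strategy is essentially the paper's. Its proof also fixes where the origin sits in one minimal subtree, lets the fact that the origin lies on every axis force the remaining choices, and arrives at either the parasol or the linear tree, with every other admissible tree a quotient of one of these. Your crossing-by-crossing induction is a more organized version of that "exhaustive guessing and checking."

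Two small repairs to Steps 1--2:
\begin{itemize}
\item An open twist with $n$ crossings has $n$ Wirtinger relations, not $2n$. Each contributes the two minimal subtrees of Lemma~\ref{minimalsubtrees}, which is where the count $2n$ comes from.
\item Conjugation by $x_i$ determines where the axis of $x_{i+1}=x_ix_{i-1}x_i^{-1}$ leaves the axis of $x_i$. It does not determine which branch that axis enters afterwards, because the axes do not record how $x_i$ moves directions off its own axis. That leftover choice is exactly the branch identification. So the exhaustiveness claim should say ``the only data not forced are which branches coincide,'' not that the new axis is pinned down.
\end{itemize}

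The genuine gap is in Step~3. Identifying branches can only merge directions at a vertex. So ``the line has no branching vertex and the parasol has one'' shows at most that the parasol is not a quotient of the line. Maximality of the line needs the other direction: the line is not a proper quotient of the parasol. Your criterion says nothing about this. A quotient of the parasol can certainly be unbranched at the origin (identify its branches there into two classes). Moreover, for $n=2$ the Case~1 parasol already has valence $2$ at its origin.

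What separates the two trees is labelled data, not shape. For example, you need two ends that leave the origin along a common branch of the parasol but lie on opposite sides of the origin in the line. In the line, all $+$ ends of the twist axes leave on one side and all $-$ ends on the other; this is what makes every $a_j^{-1}a_i$ fix the origin there, as in the proof of Proposition~\ref{twistsubtrees}. A coincidence of two ends on a common branch persists in every quotient, so such a pair rules out the line being a quotient of the parasol. The paper's sketch leaves this point implicit as well, but the justification you give for it does not work.
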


\begin{proof}
As noted, all of the minimal subtrees are contained in the parasol and linear subtrees in Figure~\ref{fig:opentwistsubtreees}. Furthermore, any tree which contains all of the minimal subtrees is a quotient of one of the subtrees in Figure~\ref{fig:opentwistsubtreees}, which can be shown by exhaustive guessing and checking. This process of guessing and checking is accelerated when we recall that the origin must lie on each axis of the tree. Choosing a position of the origin in any one of the minimal subtrees quickly forces the remaining choices, and one arrives at either a parasol or a linear composite tree structure.
\end{proof}

As we move on, we will need to think more carefully about the group action on each of these trees. Note that in \cite{O2} the vertices in the minimal subtrees are understood to be a single unit apart. For our purposes, to avoid issues when passing to the double cover of the surface group, we will view the action given by multiplication by a generator as a movement of two units along the tree rather than one unit. Accordingly we view the vertices shown in the minimal subtrees as lying two units apart. We do not draw the extra vertex in the minimal subtrees, but these extra vertices are shown as we begin to draw composite trees.

Also note that the minimal subtrees in Figure~\ref{fig:opentwistminimalsubtreees} do not completely determine the placement of $y_-$ and $x_+$ in the parasol subtree and $x_-$ and $y_+$ in the linear subtree, so these ends are omitted from the subtrees given in Figure~\ref{fig:opentwistsubtreees}.

\subsubsection{Subtrees corresponding to smoothings of twists}
Next we consider how choices of smoothings affect the structure of the subtree associated with the twist.

\begin{proposition}
\label{twistsubtrees}
If a twist is smoothed vertically, then the associated maximal subtree is a linear subtree. If a twist is smoothed horizontally, the associated maximal subtree is a parasol subtree. There are three possible structures for the parasol subtree corresponding to that twist.

\end{proposition}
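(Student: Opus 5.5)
The plan is to tie the two maximal subtrees of Proposition~\ref{Subtree} to the two smoothings through the edge fixed by the surface group. By the discussion in Section 3, the origin lies on every axis, and some edge at the origin is stabilized by the fundamental group of the incompressible (state) surface. So for a twist I would first find which products of the Wirtinger generators $x_1,\dots,x_{n+1}$ of the twist are represented by loops on the state surface restricted to that twist. Then I would check in which of the two composite trees those elements can fix a common edge at the origin.

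\textbf{Horizontal smoothing.} Each crossing becomes a half-twisted band joining the same two state disks on either side of the twist. The loops that run through two consecutive bands lie on the surface. They are represented by words such as $x_i x_{i+1}^{-1}$, or $x_i x_{i+2}^{-1}$, depending on the labeling in Figure~\ref{fig:opentwist}. Requiring all these elements to fix an edge at the origin means each pair of generator axes must leave the origin in a compatible way. I expect this to force the axes to share an initial segment and then diverge, which is the parasol picture. In the linear subtree, the translation lengths of two units would instead make such a product translate nontrivially along the common line, so it could not fix an edge.

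\textbf{Vertical smoothing.} The bands of the twist join a chain of distinct small state disks stacked vertically. The surface loops now pass around the twist, and the corresponding elements are compatible only with all axes lying on a common line through the origin. Here the relations $x_{i+1}=x_i x_{i-1} x_i^{-1}$ act by reflection-type translations along the same line. This gives the linear subtree. In each case I would confirm the fixed edge directly, using the unit-two convention for vertex spacing.

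\textbf{Counting parasol structures.} The parasol in Figure~\ref{fig:opentwistsubtreees} leaves undetermined where the ends $y_-$ and $x_+$ attach. I would enumerate how these two omitted ends can sit relative to the branches of the parasol: both on new branches, one identified with an existing branch, or the configuration forced by the twist's neighbors. I would then use the origin and edge-stabilizer constraint to rule out the other placements. I expect exactly three survivors.

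The main obstacle is this last enumeration. It is a case check on where two free ends may go subject to Lemma~\ref{minimalsubtrees} and the fixed-edge condition. I would organize it by whether each of $y_-$ and $x_+$ branches off before or after the origin vertex, and I would discard the quotients of Proposition~\ref{Subtree}'s parasol that collapse a minimal subtree.
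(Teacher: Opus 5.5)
Your selection principle is the right one, and it is the paper's: choose between the two maximal subtrees of Proposition~\ref{Subtree} by asking in which one the surface loops through the twist fix an edge at the origin. But the correspondence you then assert is inverted, so the verification step you plan would fail.

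When the crossings become parallel half-twisted bands joining the same two disks, the surface loops are the two-band words $a_j^{-1}a_i$. These fix an edge in the \emph{linear} subtree. Starting at $O$, $a_i$ moves two units toward $a_{i+}$, and then $a_j^{-1}$ moves two units toward $a_{j-}$. In the linear tree this returns you to $O$, and the same holds for the vertex one unit out along the $a_{1-}$ branch. In the parasol these words do not fix $O$. So your claim that in the linear subtree such a product ``would translate nontrivially along the common line'' is false: the two equal translations cancel.

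In the paper this parallel-band picture is the \emph{vertical} smoothing. This is consistent with Definition 2.1, where $\epsilon_j=1$ trades $n_j$ for $n_j-1$ entries $\pm 2$. The horizontal smoothing instead turns the twist into a single band running through all $n$ crossings. You have swapped both which smoothing gives which surface picture and which tree each family of words forces. The swaps cancel in the final statement, but neither intermediate claim holds.

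For the horizontal case, the loop to use is the one running along the whole band, with word $w\,a_1a_2\cdots a_n$. When that word has odd length you use its square, since then only the square is a closed loop in the orientable double cover. This word fixes the segment from $O$ to the vertex $R$ of the parasol and does not fix the origin of the linear subtree.

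This is also the missing idea behind the count of three. The prefix $w$ is forced by the adjacent twists:
\begin{itemize}
\item $w$ is trivial when both neighbors are smoothed horizontally;
\item $w$ is a single neighboring generator ($x$ or $b$) when exactly one neighbor is vertical;
\item $w=bx$ when both neighbors are vertical.
\end{itemize}
These give the parasols of Figure~\ref{fig:allsubtreecases}, with origin valence $n$, $n+1$, $n+2$. So the three structures are not the survivors of a free case check on where $y_-$ and $x_+$ attach, pruned by the edge-stabilizer condition. For a given surface the structure is determined by the neighbors' smoothings, because the neighbors' generators enter the fixed word and add branches at the origin.

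Finally, your appeal to the relations $x_{i+1}=x_ix_{i-1}x_i^{-1}$ to single out the linear tree in the vertical case cannot work. By Proposition~\ref{Subtree}, both maximal subtrees already contain every minimal subtree coming from those relations. Only the surface-group condition discriminates between them.
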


\begin{proof}
We first assume that the twist is smoothed vertically. 

Recall that when we smooth a twist vertically, the portion of the essential surface corresponding to that twist is as shown in Figure~\ref{fig:vertloop}. We see the words of the form $a_j^{-1}a_i$ are in the fundamental group of the surface, and so these must all stabilize an edge in the tree.

\begin{figure}
\centering

\includegraphics[width=4cm, height=4cm]{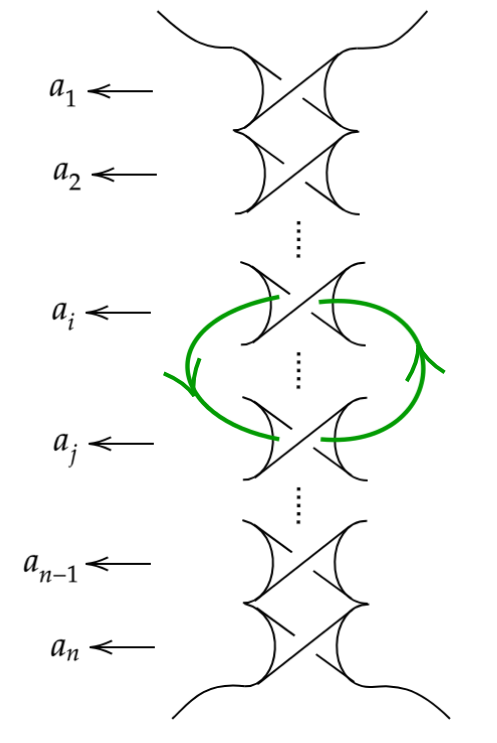}
\caption{The word corresponding to the loop in green is $a_j^{-1}a_i$.}
\label{fig:vertloop}
\end{figure}

We check that all words of the form $a_j^{-1}a_i$ fix the segment of the tree joining the origin $O$ to the first vertex along the $a_{1-}$-branch of the linear tree.

We begin at the origin, reading the word from right to left. The action given by multiplication by a generator is to translate two units along the corresponding axis. So we first move two units toward $a_{i_+}$ on the $a_i$ axis, then move toward $a_{j_-}$ on the $a_j$ axis. This brings us back to $O$, so the origin is fixed by the word $a_j^{-1}a_i$. Similarly, the vertex one unit from $0$ along the $a_{1-}$-branch is fixed by any such word, and hence the entire segment is fixed by any word of the form $a_j^{-1}a_i$.

In contrast, note that the origin is not fixed by words of the form $a_j^{-1}a_i$ in the parasol subtree. Therefore if a twist is smoothed vertically, then the corresponding subtree must be a linear subtree.

Now assume that the twist is smoothed horizontally. One example of such a smoothing for a two bridge knot is shown in Figure~\ref{fig:hhhcurve}. Here we show a horizontal smoothing of the twist in which the adjacent twists are also smoothed horizontally. 

\begin{figure}
\centering

\includegraphics[width=4cm, height=4.5cm]{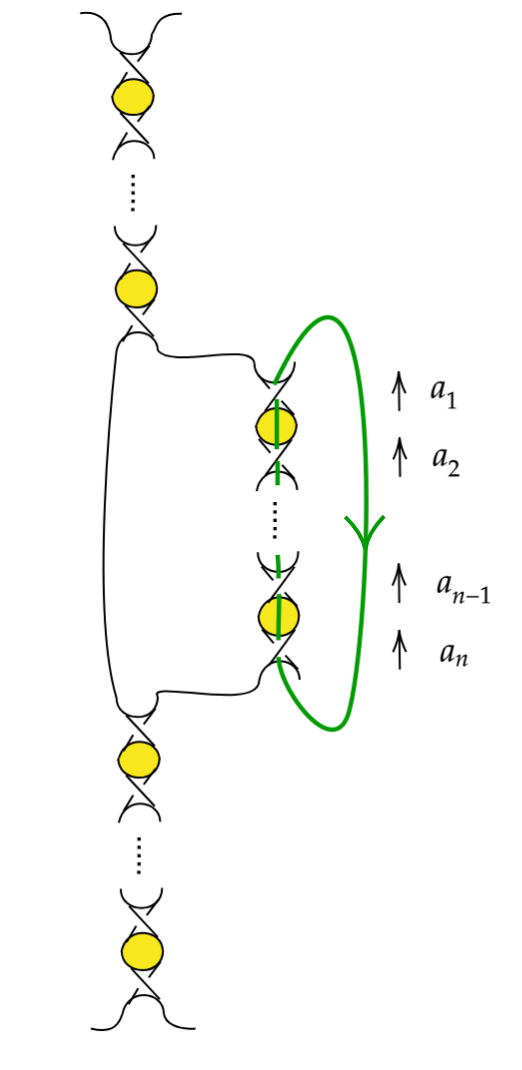}
\caption{The green curve is the simplest loop that goes through all of the crossings of the $k^{th}$ twist.}
\label{fig:hhhcurve}
\end{figure}

It is clear that any loop on the surface passing along this twisted band will follow the word $a_1a_2...a_{n-1}a_n$. In  Figure~\ref{fig:hhhcurve}, the ends of the green curve entering and exiting the twisted band shown lie in the same disk, and the green curve shown lies in the surface. However this will not always be the case, and in general a closed loop on the surface which passes along the twisted band will correspond to a word or the form $wa_1a_2...a_{n-1}a_n$ for some $w$. For 2-bridge knots, there are cases corresponding to four possible words $w$, with parasol structures shown in Figure~\ref{fig:allsubtreecases}.

\begin{figure}
\centering

\includegraphics[width=7cm, height=7cm]{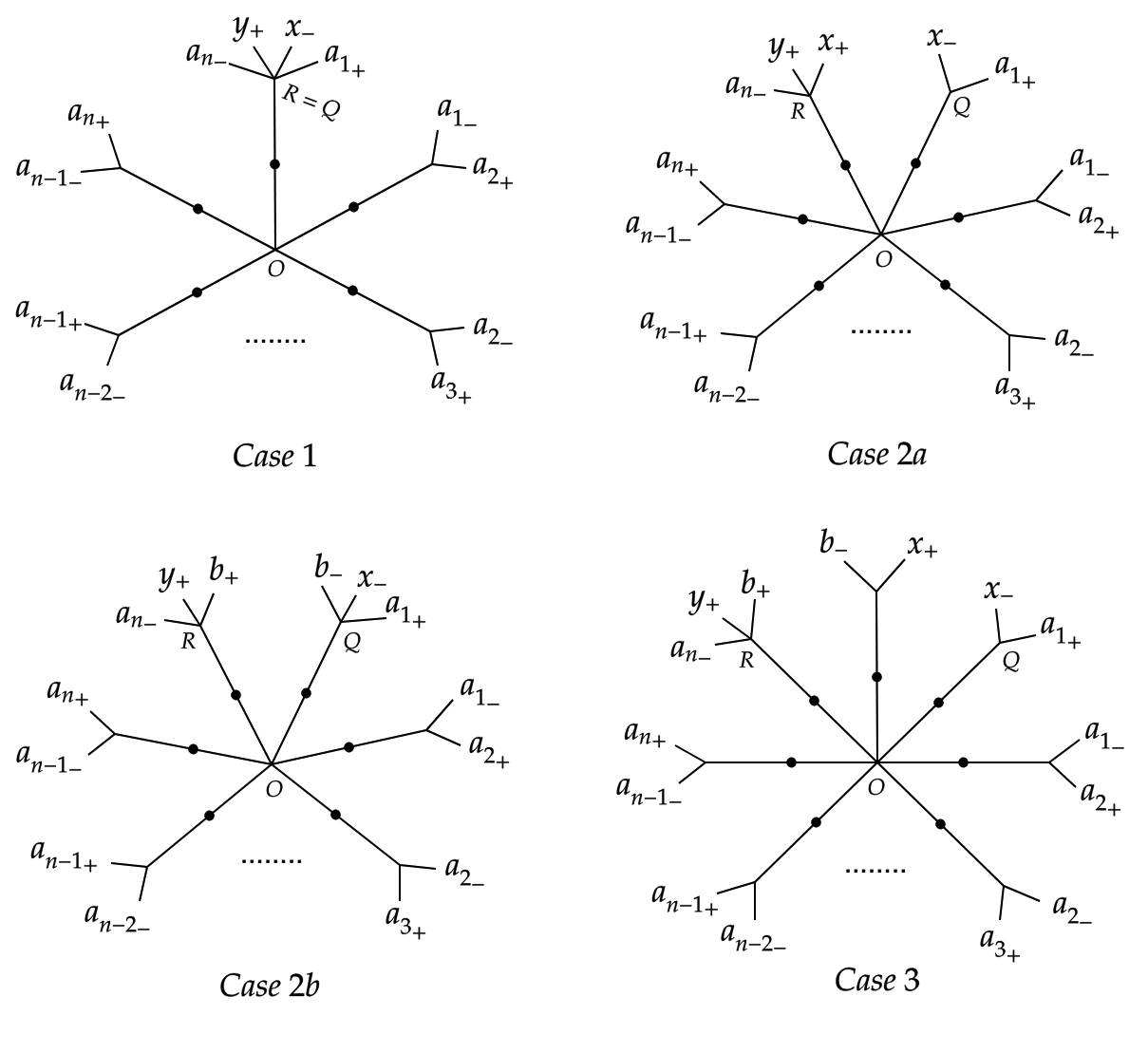}
\caption{Possible parasol subtree structures.}
\label{fig:allsubtreecases}
\end{figure}

These cases arise according to the way in which adjacent twists are smoothed. Specifically, Case 1 occurs when both adjacent twists are smoothed horizontally. In this case the word $w$ is trivial.

Case 2a occurs when the twist immediately before is smoothed vertically and the twist immediately after is smoothed horizontally. In this case $w = x$, where $x$ is the generator moving through the bottom crossing of the twist above (oriented right to left).  Case 2b occurs when the twist immediately before is smoothed horizontally and the twist immediately after is smoothed vertically. Here $w=b$, where $b$ is the generator moving from left to right through the top crossing of the twist immediately after the $a$-twist.

Case 3 occurs when both adjacent twists are smoothed vertically. Here $w = bx$.

When the first twist of a knot is smoothed horizontally, the parasol subtree has the structure of Case 1 if the second twist is smoothed horizontally and the structure of Case 2b if the second twist is smoothed vertically. Similarly, when the last twist of a knot is smoothed horizontally, the parasol has the structure given in Case 1 or Case 2a, depending only on the smoothing of the second-to-last twist.

If the wordlength of the word $wa_1a_2...a_{n-1}a_n$ is even, then either the surface is orientable or the loops described are loops in the orientable double cover. The reader may check that both $O$ and $R$ (and hence the segment joining them) in each parasol subtree is fixed by the corresponding word for that case. 

If the wordlength of $wa_1a_2...a_{n-1}a_n$ is odd, then the loop described is not closed in the oriented double cover of the surface, and instead $wa_1a_2...a_{n-1}a_nwa_1a_2...a_{n-1}a_n$ corresponds to a simplest closed loop through the twisted band. Again the reader may confirm that this word fixes the segment from $O$ to $R$ using the appropriate parasol subtree. 

Finally note that the words do not fix the origin in the linear subtree. It follows that when the twist is smoothed horizontally, the corresponding subtree is a parasol.

\end{proof}

\subsection{All Horizontal Smoothing} \label{horizontalsection}

For each twist and for each allowable smoothing, we now know the maximal subtrees corresponding to the twist.  It remains to assemble these maximal subtrees to build a tree associated to the surface. To do this, recall that by Lemma \ref{minimalsubtrees}, for each crossing of the knot diagram there must be two minimal subtrees included in the tree. Most of these have already been incorporated into the subtrees corresponding to the twists. 

\begin{definition} A \textbf{gluing minimal subtree} is a minimal subtree involving generators from multiple twists in the knot diagram.
\end{definition}

These gluing minimal subtrees provide the remaining information needed for assembling our parasol and linear subtrees corresponding to the twists into a tree for the surface. 

Recall that the tree we draw corresponding to each surface will show only the axes corresponding to our chosen generators. It is the maximal such tree upon which the knot group can act with the requirement that the surface group acts as an edge stabilizer. 
\begin{definition}
We refer to this tree as the \textbf{basic tree} for the incompressible surface.
    
\end{definition}

We first construct the basic tree corresponding to the all-horizontal smoothing of a 2-bridge knot. Let $K$ be a 2-bridge knot with all-positive continued fraction expansion $[n_1,n_2,\ldots,n_k]$. We assume that $|n_i|\geq 2$ for all $i$, so that the all-horizontal smoothing is allowable.  Our tree will be assembled from the $k$ subtrees corresponding to the twists in the associated knot diagram. 

For the essential surface obtained by smoothing all of the twists horizontally, each of these subtrees is a parasol subtree. Call the origins of these subtrees $O_1, O_2, \ldots, O_k$, where $O_1$ corresponds to the origin of the subtree of the first twist, $O_2$ corresponds to the origin of the subtree of the second twist, and so on. Define sets $A$ and $B$ such that $A = \{O_i \mid i \mbox{ odd}\}$ and $B = \{O_j \mid j \mbox{ even}\}$. In the basic tree associated with the all horizontal smoothing of $K$, all of the origins in set $A$ are placed at a single vertex called $P_A$, and all of the origins in set $B$ are placed at a second vertex called $P_B$. Join $P_A$ and $P_B$ by a segment of length two units, and call the vertex between them $O$, the origin of the basic tree. Place the parasols for the twists in odd positions so that the vertex of the parasol is at $P_A$ and so that the vertex $R$ from  Case 1, Figure \ref{fig:allsubtreecases} is at $P_B$. Place all parasols for the twists in even positions so that the vertex of the parasol is at $P_B$ and the vertex $R$ from Case 1, Figure \ref{fig:allsubtreecases} is at $P_A$. The remaining branches of all of the subtrees are distinct. 

To illustrate, in Figure~\ref{fig:5346hhhh}  we see the fully assembled tree corresponding to the all-horizontal smoothing of the $[5,4,3,6]$ 2-bridge knot. This is assembled using the gluing minimal subtrees shown in Figure \ref{5346gluingminimal}. The reader may confirm that the parasols have the structure given in Case 1 of Figure~\ref{fig:allsubtreecases} and that the gluing minimal subtrees appear correctly in the tree. 

Now note that the fundamental group of the surface is generated by the words found in Section 4.0.2 for each twist. The segment from $P_A$ to $P_B$ is fixed by all such words. Thus, the surface group stabilizes this edge.

\begin{figure}
\centering
\includegraphics[width=6cm, height=8cm]{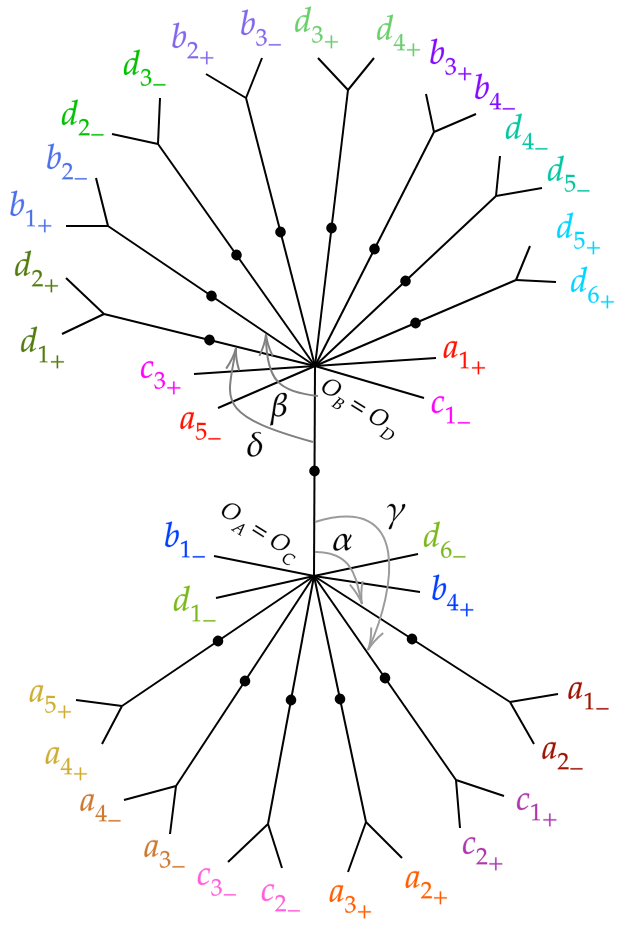}
\caption{Basic tree corresponding to the all horizontal smoothing of the $[5,4,3,6]$ knot.}
\label{fig:5346hhhh}

\end{figure}

\begin{figure}
\centering
\includegraphics[width=12cm, height=6cm]{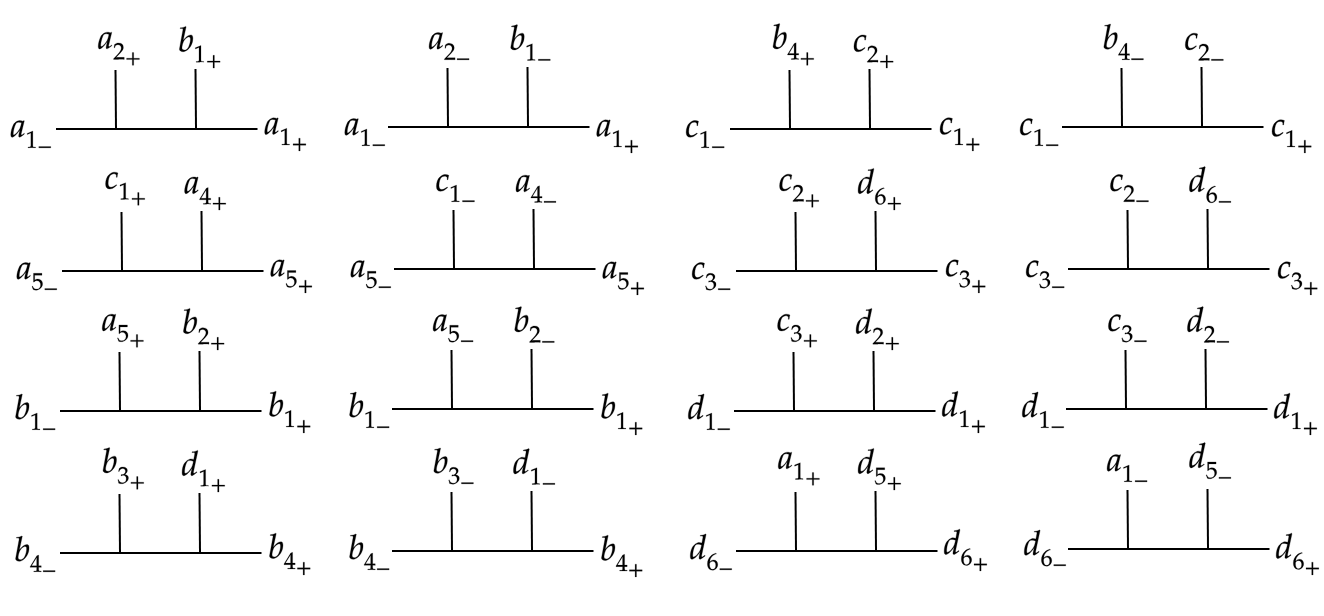}
\caption{Gluing minimal subtrees corresponding to $[5,4,3,6]$ knot.}
\label{5346gluingminimal}

\end{figure}

To summarize so far: we have found the basic tree associated to the surface arising from smoothing all crossings horizontally for any 2-bridge knot. The labels on the axes describe the corresponding group action.

\subsubsection{Additional actions on the basic tree}

It remains to describe the additional group actions associated to the surface. Any other group action associated to the surface must still obey all that we have established here. In fact, the full family of group actions associated to the surface is easily deduced by allowing the group to act a bit differently on the parasols in the basic tree, as we now describe. 

Consider the parasols shown in Figure~\ref{fig:allsubtreecases}. Since all smoothings are horizontal in this case, so that all parasols fall under Case 1, 
the valence of the origin for each parasol subtree of the basic tree is the number of crossings of the twist, $n_i$. We may think of each parasol as lying in a plane, letting the angle between adjacent branches at the origin be $\frac{2\pi}{n_i}$.   In Figure~\ref{fig:5346hhhh} we see the angles for the first, second, third, and fourth parasols, respectively, are $\alpha = \frac{2\pi}{5}$, $\beta = \frac{2\pi}{4}$,  $\gamma = \frac{2\pi}{3}$, and $\delta = \frac{2\pi}{6}$. More generally, for the surface corresponding to the all horiziontal smoothing of the knot $[n_1,n_2, ...,n_k]$, we have $k$ such angles, $\alpha_1,\alpha_2,\ldots,\alpha_k$.
 
 Now to obtain new group actions on $T$, we can multiply $\alpha_i$ by $1, 2, \ldots, j_i -1$. Visually, this changes how the $i^{th}$ parasol subtree sits in $T$. We may obtain a tree isopmorphic to $T$ with adjusted labels. Alternatively, the new tree may be a proper quotient of $T$ if we multiply $\alpha_i$ by an integer $m$ such that $m$ and $n_i$ are not relatively prime and $gcd(m, n_i) \not = \frac{n_i}{2}$. In this case the new parasol will have $\frac{j_i}{gcd(m, j_i)}$ branches at $O_i$. The branches of the new parasol will include portions of the axes for more generators than before.
 
 
 We can do this independently for each of the $i$ parasol subtrees of $T$ with $1 \leq i \leq k$. To illustrate this idea, in Figure~\ref{fig:5346hhhhquotient} we show the quotient tree where $\alpha, \beta,$ and $\gamma$ are unchanged and $\delta$ is multiplied by 2. Note that there are 6 branches at the origin of the subtree associated with the D twist, and $gcd(6,2) = 2$. So the tree corresponding to an action where $\delta$ is multiplied by 2 will be a quotient of Figure~\ref{fig:5346hhhh} with only three branches in the new parasol associated to the $D$-twist. 

\begin{figure}
\centering
\includegraphics[width=6cm, height=7cm]{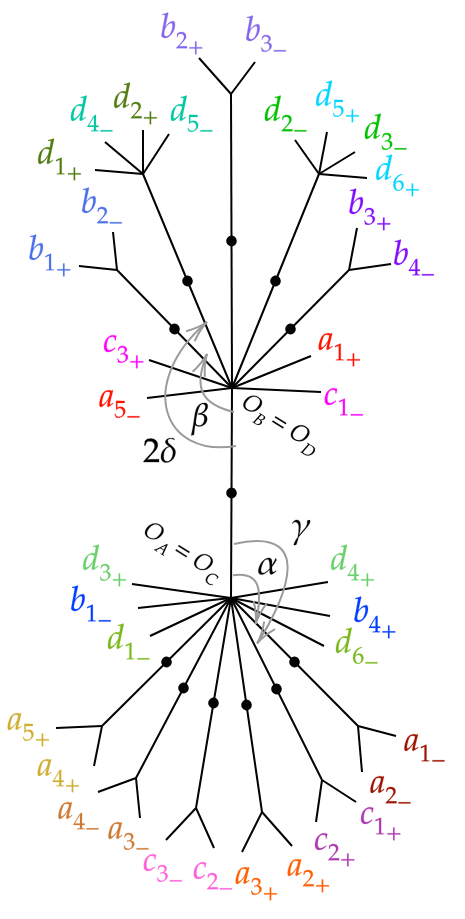}
\caption{Multiplying $\delta$ by 2.}
\label{fig:5346hhhhquotient}

\end{figure}

We must make a slight adjustment if we multiply $\alpha_i$ by an integer $m$ where $gcd(m,n_i)=\frac{n_i}{2}.$ (Note that this implies $j_i$ is even). This changes the angle between adjacent branches in the resulting $i^{th}$ subtree to $\pi$. The resulting subtree is no longer a precise quotient of the basic tree. Instead, in this case we move the origin $O_i$ of the subtree  to the vertex $O$ between $P_A$ and $P_B$. All of the axes in the subtree then contain the segment from $P_A$ to $P_B$, and the axes diverge from $P_A$ and $P_B$ as they do from the origin in the original parasol subtree.  An example of this for the tree associated with the all-horizontal smoothing of the $[5,4,3,6]$ knot, where $\beta = \frac{2\pi}{4}$ is multiplied by 2, is shown in Figure~\ref{fig:5346hhhhbroom}.

\begin{figure}
\centering
\includegraphics[width=6cm, height=7cm]{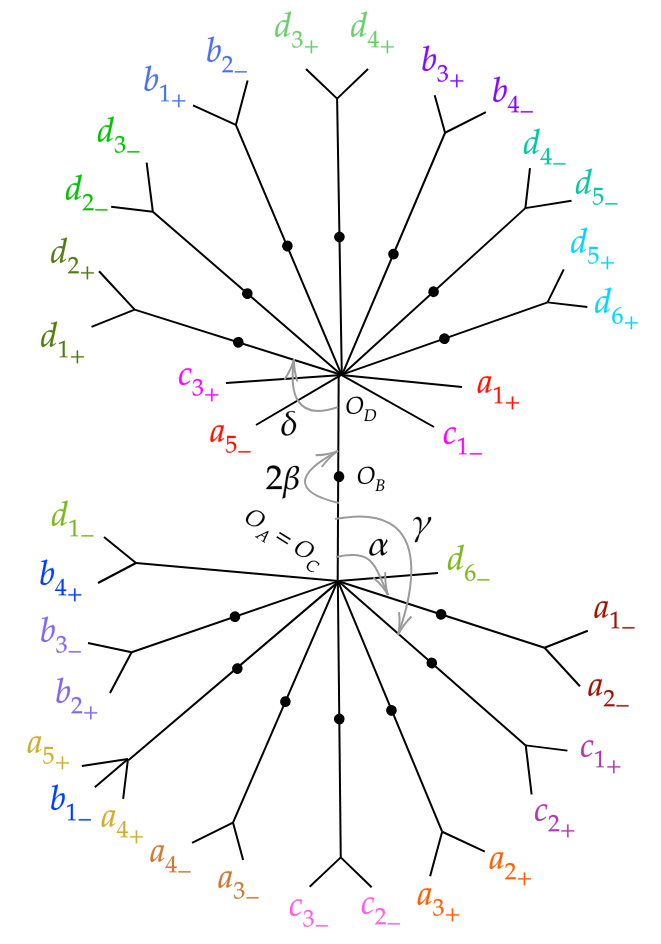}
\caption{Multiplying $\beta$ by 2.}
\label{fig:5346hhhhbroom}

\end{figure}

This process generates all possible trees corresponding to irreducible group actions of the fundamental group of the knot such that the surface group for the surface arising from smoothing each crossing horizontally stabilizes an edge containing the origin. We use this fact in Section 5 to find the precise weight of the surface.

\subsection{Other smoothings} \label{othersmoothingsection}

\subsubsection{Basic Trees}

In the general case, we obtain the essential surfaces of a 2-bridge knot by smoothing each twist either horizontally or vertically, according to the choice of allowable smoothing from the set $S$.  Let $S_i$ denote the subtree corresponding to the $i^{th}$ twist of the knot, with corresponding origin $O_i$.  As above, let $A = \{O_i \mid i \mbox{ odd}\}$ and $B = \{O_j \mid j \mbox{ even}\}$.  If the $i^{th}$ twist of the knot is smoothed vertically, move $O_i$ to the opposite set, forming new sets $A'$ and $B'$ , or possibly a single set $A'$. If two sets are obtained, form a segment of length two units, with endpoints labeled $P_{A'}$ and $P_{B'}$. If a single set is formed, let $ P = P_{A'} = P_{B'}$ be a point.

We first consider the subtrees corresponding to each twist. Each twist smoothed horizontally will contribute a parasol subtree of one of the types shown in Figure~\ref{fig:allsubtreecases}, with the precise nature of the parasol determined as in the proof of Proposition \ref{twistsubtrees}. Each twist smoothed vertically will contribute a linear tree. 

Let $E$ be a branch of a linear tree lying on one side of the origin.  Note that the branch includes a branch of an axis corresponding to a generator from another twist. This generator also appears on a branch of the parasol or linear tree corresponding to this second twist. This branch will overlap with $E$ in the assembled tree.

With this in mind, assemble the tree as follows. Place each linear tree  so that the origin $O_i$ lies at $P_{A'}$ or $P_{B'}$, according to whether the origin is in the set $A'$ or $B'$ (or at $P$ if $P_{A'} = P_{B'}$.)  One end of the linear tree will contain a generator from a tree with an origin in the opposite set. Place this end of the linear tree so that it includes the segment $P_{A'}P_{B'}$ as its first two segments. If two linear trees have ends which overlap initially, their origins will lie in the same set, and the branches will divide two units along from this origin.

Now place parasols for the remaining twists with origins at $P_{A'}$ or $P_{B'}$ (or $P$), as appropriate. If a branch of a parasol overlaps with an end of a linear tree, and if the branch of the parasol contains a generator which does not yet appear on the end of the linear tree, place the generator on the end of the linear tree, two units along the end from the origin of the parasol. 

To illustrate this, we show the maximal trees for the (0,1,0,0) and the (1,0,1,0) smoothings of the [5,4,3,6] knot if Figures \ref{fig:5346hvhh} and \ref{fig:5346vhvh}, respectively. 

Note in the first tree, one end of the $b$-linear tree coincides with the $d_{1+},d_{2+}$ branch of the $d$-parasol and the other end of the $b$-linear tree coincides with the branch of the Case 2a parasol for the $a$-twist which includes $R$. Note here that $d_{2+}$ and $c_{1-}$ have been placed two units away from the points $O_D$ and $O_C$, respectively. 

In the second tree we see that $P_{A'}=P_{B'}$, that the $a$-tree overlaps with the $c$-tree on one end and with the branch of the Case 3 $b$-parasol containing the point $Q$ on the other end. The $c$-linear tree overlaps with branches from the $b$- and $d$-parasols.

One checks that the resulting tree contains all gluing minimal subtrees.

\begin{figure}
\centering
\includegraphics[width=7cm, height=7cm]{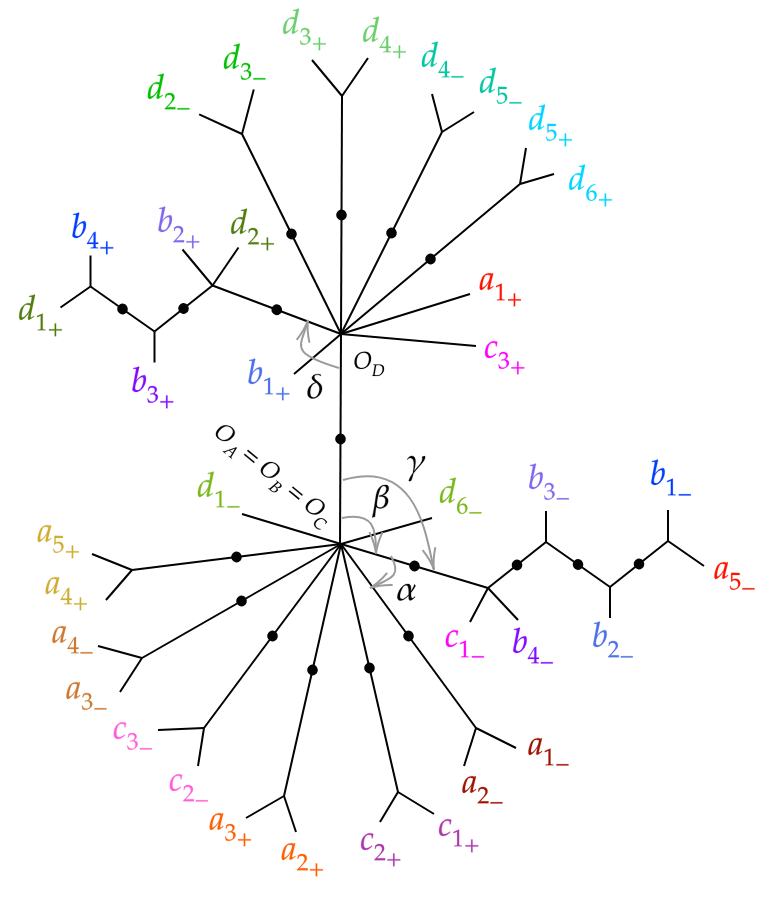}
\caption{Basic tree corresponding to the horizontal-vertical-horizontal-horizontal smoothing of the $[5,4,3,6]$ knot.}
\label{fig:5346hvhh}

\end{figure}

\begin{figure}
\centering
\includegraphics[width=9cm, height=7cm]{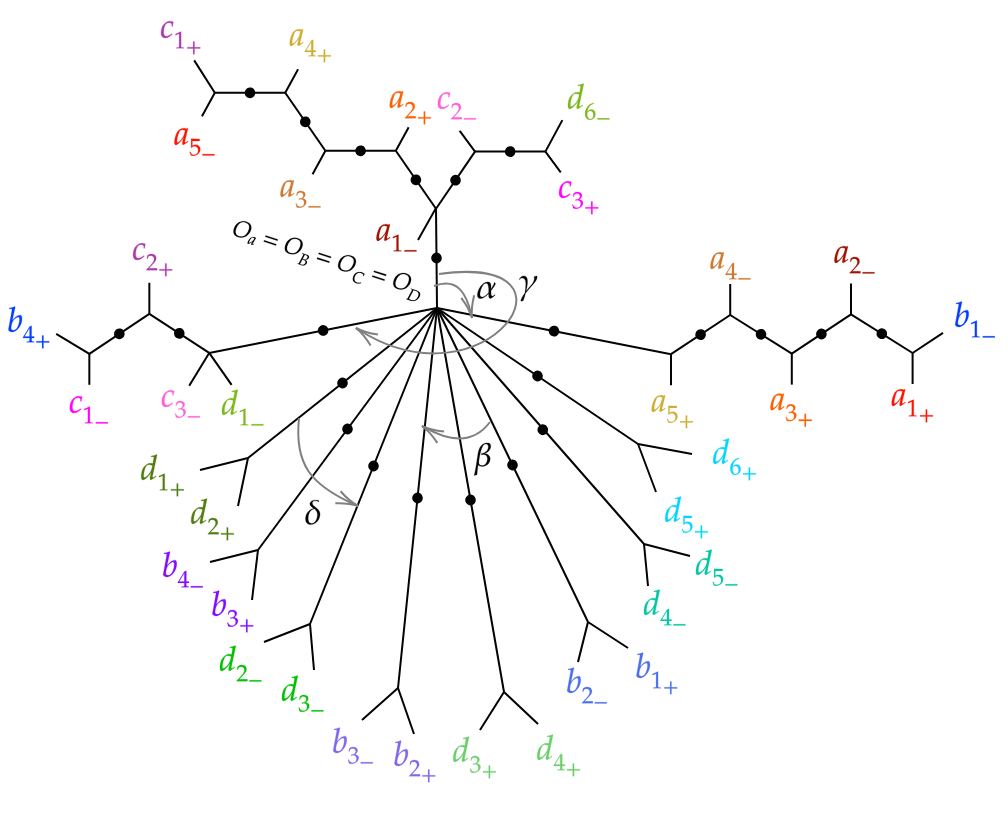}
\caption{Basic tree corresponding to the vertical-horizontal-vertical-horizontal smoothing of the $[5,4,3,6]$ knot.}
\label{fig:5346vhvh}

\end{figure}

\subsubsection{Additional group actions}

As in the case of the all horizontal smoothing, we see that any group action for which the surface group acts trivially must act on the basic tree corresponding to the surface found in 3.3.1. And as in the earlier case, we see the only possible variations are to multiply the angles for the various subtrees by any number from 1 to one less than the valence of the subtree. For linear subtrees, there is then no choice of action, as the valence of the origin is 2. For parasol subtrees note that the valence of the origin of the parasol subtree in Case 1 is the number of crossings of the twist, $n_i$. The valence of the origin in Cases 2a and b is $n_i+1$, and here we let the angle between adjacent branches be $\frac{2\pi}{n_i+1}$. The valence of the origin in Case 3 is $n_i+2$, and we set the angle between adjacent branches to be $\frac{2\pi}{n_i+2}$.  

It is straightforward to obtain most of these group actions, as in the case of the all horizontal smoothing. The trees obtained are either equivalent to the basic tree or are quotients of the basic tree in which the only aspect of the tree that changes is the identification of some branches of parasol subtrees. 

As in the all-horizontal case, we must make one additional adjustment in order to multiply the angle of a subtree with an even number $p$ of branches by $m = \frac{p}{2}$, so that the new tree obtained has an angle of $\pi.$  Call the origin of this subtree $O_i$. If this parasol corresponds to a twist in which an adjacent twist is smoothed vertically, then $O_i$ moves one unit along the linear subtree corresponding to the adjacent twist smoothed vertically. The direction in which $O_i$ moves is determined by the gluing minimal subtrees. 

\begin{proposition}
For any smoothing, there exists a tree realizing each choice of angles given by the basic tree. 
\end{proposition}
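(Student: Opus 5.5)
We will prove the statement by explicitly constructing, for each choice of angle multipliers $(m_1,\ldots,m_k)$ with $1\le m_i\le v_i-1$ on the parasol subtrees, a tree together with an action of the knot group on it. Here $v_i$ is the valence $n_i$, $n_i+1$ or $n_i+2$ of the origin $O_i$, according to Case 1, 2a/2b or 3 of Proposition~\ref{twistsubtrees}. The multipliers on linear subtrees are forced. Our approach is to treat the parasols one at a time and independently, starting from the basic tree $T$ of Section~\ref{othersmoothingsection}. We replace the $i$th parasol by its ``rotated'' version: the branch carrying the end $a_{j\pm}$ is sent to the branch carrying $a_{(jm_i)\pm}$, indices taken mod $v_i$. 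When several axes land on the same branch we identify those branches, which yields a quotient of $T$. To obtain the action, we define each generator as a translation by two units along its new axis. We must then check two things. First, every minimal subtree of Lemma~\ref{minimalsubtrees} is contained in the new tree. Second, the surface group fixes the segment $P_{A'}P_{B'}$ (or $O$ together with its neighbour when $P_{A'}=P_{B'}$).

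The key steps are the following. (1) The within-twist minimal subtrees. The relations $a_{j+1}=a_ja_{j-1}a_j^{-1}$ along a twist only involve consecutive branches of the parasol. Rotating by a multiple $m_i$ sends consecutive branches to branches separated by angle $m_i\cdot 2\pi/v_i$. Since the minimal subtrees of Figure~\ref{fig:opentwistminimalsubtreees} only record which ends share a branch and where the axes diverge, they survive in any such quotient, exactly as in the argument for Proposition~\ref{Subtree}. (2) The gluing minimal subtrees. These involve only the branch of a parasol that meets $P_{A'}P_{B'}$ (the branch through $R$, or through $Q$ in Case 3) or overlaps a linear tree. We normalize the rotation so that this distinguished branch is fixed. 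The gluing data are then unchanged, and the other parasols and the linear trees are untouched. (3) The surface words. The words $wa_1\cdots a_n$ (or their squares) from Proposition~\ref{twistsubtrees} and $a_j^{-1}a_i$ for vertical twists are again checked by following the two-unit translations from the origin. The rotated parasol is still a parasol with the same segment $O_iR$, so the computation goes through verbatim.

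The main obstacle is the case $\gcd(m_i,v_i)=v_i/2$. Here the rotated parasol collapses to angle $\pi$, so it is not a quotient of $T$. Two axes then share a line through $O_i$, and the surface word no longer fixes $O_i$ unless the origin is moved. We handle this as the paper prescribes. If no adjacent twist is vertical, we move $O_i$ to the midpoint $O$ of $P_{A'}P_{B'}$, with the axes running along the whole segment and diverging at $P_{A'}$ and $P_{B'}$. If an adjacent twist is vertical, we slide $O_i$ one unit along that linear subtree, in the direction dictated by the gluing minimal subtree that links the two twists. For this step we would verify directly that the relevant gluing minimal subtree and the surface word still fix the edge. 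Note that these moves for different $i$ affect disjoint parts of the tree near $P_{A'}P_{B'}$. Checking that two such moves from parasols on opposite sides of a common vertical twist are compatible is the delicate point. We would settle it by the case analysis over Cases 1, 2a, 2b, 3, using that allowable smoothings have no two adjacent vertical twists.

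Finally, combining the independent modifications gives a tree and action for every tuple of angles. By construction the surface group stabilizes an edge containing the origin, which proves the proposition.
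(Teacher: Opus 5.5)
Your construction is the one the paper sketches just before the statement: rotate each parasol by its multiplier, pass to the quotient when $\gcd(m_i,v_i)>1$, and move $O_i$ when the new angle is $\pi$. The paper gives no proof beyond that sketch. As a proof, though, your proposal has a genuine gap at the step ``define each generator as a translation by two units along its new axis.''

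The tree you draw is a finite union of lines, so it has finitely many branch points. It therefore cannot be invariant under an irreducible action: a hyperbolic element preserving a finite set of branch points would have a power fixing a point. So the knot group must act on a larger tree that you never build. Even there, a hyperbolic isometry is not determined by its axis and translation length. You must also say how it carries the branches hanging off its axis, and that is exactly what decides where $a_j$ sends the axis of $a_{j-1}$.

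Your checks (1)--(2) cannot substitute for this. By Lemma~\ref{minimalsubtrees}, the minimal subtrees are \emph{necessary} consequences of the relations $z=xyx^{-1}$, not sufficient for them. Moreover, Theorem~\ref{weightsfromvalences} counts ideal points by counting angle choices, so ``realizing'' a choice must mean producing an ideal point whose Serre tree carries that action. For $\gcd(m_i,v_i)=1$ the new tree is just the basic tree with relabeled axes, and relabeling a picture does not show that a corresponding action, let alone an ideal point, exists.

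The missing ingredient is Ohtsuki's realization result. In \cite{O2}, candidate trees are shown to be Serre trees by exhibiting families of representations degenerating to an ideal point, and the paper records in Section~3 that for two-bridge knots every candidate tree built from the minimal subtrees is a Serre tree. You need either to invoke that fact explicitly, after which your steps (1)--(3) become the verification that the modified tree is a candidate tree with the right edge stabilized, or to construct the degenerating representations yourself.

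Even granting that, the verification contains false claims.
\begin{itemize}
\item In (1), minimal subtrees do not ``survive in any such quotient.'' The paper warns that identifying parasol branches can collapse them, and your own angle-$\pi$ case is an instance. For $1<\gcd(m_i,v_i)<v_i/2$ you must actually check the triples $a_{j-1},a_j,a_{j+1}$ whose branches newly coincide.
\item In (2), normalizing the rotation fixes the \emph{position} of the distinguished branch but not its contents. When $\gcd(m_i,v_i)>1$, that branch acquires the axes of every generator rotated onto it, and these must then be placed along $P_{A'}P_{B'}$ or along the adjacent linear tree. So the gluing minimal subtrees change and must be re-verified; the claim that ``the gluing data are then unchanged'' is false.
\item The compatibility of angle-$\pi$ moves on the two sides of a vertical twist, which you rightly flag as the delicate point, is promised rather than carried out.
\end{itemize}
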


Thus the various possible group actions associated to the surface arise from all possible choices of angles for the various parasol subtrees within the basic tree for the surface.

\section{Weights of incompressible surfaces in 2-bridge knot complements}

To complete our analysis of 2-bridge knots, we explicitly count the group actions associated to each surface. In the previous section we saw that the various group actions correspond to choices of angles for each of the subtrees associated to twists. However there is some duplication. We show:

\begin{theorem}
\label{weightsfromvalences}
Let $K$ be a 2-bridge knot with continued fraction expansion $[n_1, n_2, \ldots, n_k]$. Let $s$ be an allowable smoothing for $K$, with associated incompressible state surface $\Sigma_s$, and let $T$ be the basic tree for this surface. The weight of $\Sigma_s$ is $$\frac{1}{2} (\gamma +\prod_i (v_i -1)),$$ where $v_i$ is the valence of the subtree of $T$ corresponding to the $i^{th}$-twist and 

\[ \gamma =  \left\{ \begin{array}{rl}
      -1 & \mbox{ if the surface is orientable}\\
      0 & \mbox{ if the surface is non-orientable.}
   \end{array}\right.
\]

\end{theorem}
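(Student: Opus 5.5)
The plan is to reduce Theorem~\ref{weightsfromvalences} to Proposition~\ref{OhtsukiSmoothings}. Both statements have the form $\frac12(\gamma+\prod_i \cdot)$ with the same $\gamma$, so it suffices to show that $v_i - 1 = \delta_i$ for every twist $i$. We then explain, in terms of the tree picture of Section~4, why this count of actions is correct.

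\textbf{Step 1: the valences.}
We read off $v_i$ from Proposition~\ref{twistsubtrees} and the case analysis in its proof.

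If $\epsilon_i = 1$, the twist is smoothed vertically and its subtree is linear. The origin then has valence $2$, so $v_i - 1 = 1 = \delta_i$.

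If $\epsilon_i = 0$, the subtree is a parasol, and we split into the cases of Figure~\ref{fig:allsubtreecases}.
\begin{itemize}
\item Case 1: both neighbours are horizontal, so $\epsilon_{i-1} = \epsilon_{i+1} = 0$, and the valence is $n_i$.
\item Cases 2a and 2b: exactly one neighbour is vertical, and the valence is $n_i + 1$.
\item Case 3: both neighbours are vertical, and the valence is $n_i + 2$.
\end{itemize}
For the first and last twists, the convention $\epsilon_0 = \epsilon_{k+1} = 0$ matches the stated reduction to Case 1 or Case 2. In every case $v_i = n_i + \epsilon_{i-1} + \epsilon_{i+1}$, so $v_i - 1 = \delta_i$. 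Substituting into Proposition~\ref{OhtsukiSmoothings} gives the formula.

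\textbf{Step 2: an intrinsic count.}
We would also give a count independent of Ohtsuki, making the phrase ``there is some duplication'' precise. By the proposition at the end of Section~4, actions correspond to choices of angle multipliers. For the $i$-th subtree there are $v_i - 1$ admissible multipliers $m \in \{1, \ldots, v_i - 1\}$; the multiplier $0$ is excluded since it would make the action reducible. This gives $\prod_i (v_i - 1)$ labelled configurations.

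Next we argue that the global symmetry which reverses all rotations, $m_i \mapsto v_i - m_i$ simultaneously, yields the same character, i.e.\ the same ideal point. This should correspond to the conjugation or complex-conjugate pairing of roots in Ohtsuki's setting. This pairing accounts for the factor $\frac12$.

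\textbf{Step 3: fixed points of the involution (main obstacle).}
The involution $m_i \mapsto v_i - m_i$ fixes a configuration only if $m_i = v_i/2$ for every $i$. This requires every $v_i$ to be even, which is the ``angle $\pi$'' broom configuration of Figures~\ref{fig:5346hhhhbroom} and after.

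We would show two things about this configuration:
\begin{itemize}
\item All $v_i$ even holds exactly when the surface is orientable. We would check this via the parity of the words $w a_1 \cdots a_n$ in the proof of Proposition~\ref{twistsubtrees}, since orientability of the state surface is a parity condition on the bands.
\item In the orientable case this single fixed configuration does not give an ideal point. Its action has the origin moved onto the segment $P_AP_B$, and it should correspond to the reducible/abelian characters at slope $0$ rather than to an ideal point for $\Sigma_s$.
\end{itemize}
Granting both, the count is $\frac12(\prod_i(v_i-1) - 1)$ when $\Sigma_s$ is orientable and $\frac12\prod_i(v_i-1)$ otherwise, which is exactly $\frac12(\gamma + \prod_i (v_i - 1))$.

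Identifying the involution with equality of ideal points, and showing that the fixed configuration is spurious, is the delicate part. If it cannot be carried out cleanly, the fallback is Step 1 alone, leaning on Proposition~\ref{OhtsukiSmoothings}.
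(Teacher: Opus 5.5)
Your proposal is correct and matches the paper's proof. The paper likewise reads off $v_i = 2, n_i, n_i+1, n_i+2$ for linear, Case~1, Case~2 and Case~3 subtrees to get $v_i-1=\delta_i$ and reduce to Proposition~\ref{OhtsukiSmoothings}, and then explains the formula as $\prod_i (v_i-1)$ angle choices, minus the single all-$\pi$ reducible configuration when $\Sigma_s$ is orientable, divided by $2$ because reflected trees give conjugate actions; your treatment of the reflection as the involution $m_i \mapsto v_i - m_i$ with that configuration as its unique fixed point (and your parity condition stated via the $v_i$ rather than the $n_i$) is a slightly more careful version of the paper's last step.
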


\begin{proof}
We show the weight given in Theorem~\ref{weightsfromvalences}   agrees with that given in Theorem~\ref{OhtsukiSmoothings}. Let $s = (\epsilon_1, \epsilon_2,...,\epsilon_k)$. Let $T_i$ be the subtree of $T$ corresponding to the $i^{th}$-twist for $1 \leq i \leq k$.

For each subtree $T_i$,  we have 

\[v_i = \left\{ \begin{array}{ll}
2 & \mbox{if $T_i$ is linear}\\
n_i & \mbox{if $T_i$ is a Case 1 parasol}\\
n_i + 1 & \mbox{if $T_i$ is a Case 2 parasol}\\
n_i + 2 & \mbox{if $T_i$ is a Case 3 parasol.}
\end{array}\right.
\]

We know $T_i$ is a parasol if $\epsilon_i=0$ and is linear if $\epsilon_i = 1$. Further if $\epsilon_i=0$, the parasol $T_i$ is of type 

$$ \begin{array}{ll}

\mbox{Case 1} & \mbox{if $i=1$ and $\epsilon_{i+1}=0$}\\
& \mbox{or $1 < i < k$ and $\epsilon_{i-1} = 0$ and $\epsilon_{i+1}=0$}\\
& \mbox{or $1 = k$ and $\epsilon_{i+1}=0;$}\\
\mbox{Case 2} & \mbox{if $i=1$ and $\epsilon_{i+1}=1$}\\
& \mbox{or $1 <i < k$ and exactly one of $\epsilon_{i\pm 1} = 1$}\\
& \mbox{or $1 = k$ and $\epsilon_{i+1}=1;$}\\
\mbox{Case 3} & \mbox{if $1 < i <k$ and $\epsilon_{i-1} = 1$ and $\epsilon_{i+1} = 1$.}
\end{array} $$

Since the number of possible angles for each tree $T_i$ is one less than $v_i$, we find we have exactly 


\[ v_i -1 =  \left\{ \begin{array}{ll}
      n_i - 1 + \epsilon_{i-1} + \epsilon_{i+1} & \mbox{ if  $\epsilon_i = 0$}\\
      1 & \mbox{ if $\epsilon_i = 1$.}
   \end{array} \right.
\]
choices of angle for $T_i$. Thus  $v_i-1 = \delta_i$, where $\delta_i$ is tas defined in  Thoerem~\ref{OhtsukiSmoothings}.

Now, recall that the character variety is the variety of irreducible representations of the fundamental group, and therefore we count only those actions which are irreducible. A group action is \textit{reducible} if there is a subtree which is invariant under the group action.

In our case, the only reducible group action in the count above arises when every angle in the tree is chosen to be $\pi.$ In this case the group action on the quotient tree fixes a line in the tree. This is possible only if the $n_i$'s are all even, which means that the surface is orientable. Thus, if the surface is orientable, we remove one angle combination. This gives us the formula

\[\gamma + \prod_{i=1}^{k} \delta_i\]

where

\[ \gamma =  \left\{ \begin{array}{rl}
      -1 & \mbox{ if the surface is orientable}\\
      0 & \mbox{ if the surface is non-orientable}
   \end{array}\right.
\]

Finally, note that if the trees realizing two group actions are reflections of one another, then the group actions are conjugate.
Thus we divide our final count by 2, arriving at the weight

\[\frac{1}{2}\left(\gamma + \prod_{i=1}^{k} \delta_i\right)\]
\noindent given in Theorem~\ref{OhtsukiSmoothings}.

\end{proof}

\section{A family of pretzel knots}
To complete the paper, we discuss the possible extension of these ideas to computations for Montesinos knots. We believe that this exhaustive treatment for 2-bridge knots can be extended to Montesinos knots broadly. Our development of trees arises directly from Ohtsuki's lemma, given here as Lemma~\ref{minimalsubtrees}, and in this broad sense we offer nothing new. However our development of subtrees associated to twists makes the project of assembling trees to compute weights more realistic from a practical standpoint. Essentially, we have reduced the tree assembly problem from a sort of jigsaw puzzle with a very large number of puzzle pieces to a much smaller puzzle with only a handful of pieces. 

Note that the boundary slopes of incompressible surfaces for all Montesinos knots were identified in \cite{HO}. In fact Nathan Dunfield has created a program for generating these \cite{D}. Thus the computational tool needed for the computation of the CGLS-seminorms and related invariants is really the weights, which are read easily from the trees.

To illustrate, we have found what we conjecture are the nine basic trees corresponding to the incompressible surfaces in the complement of the $(p,q,r)$ pretzel knot, where $p$, $q$, and $r$ are all odd and greater than 1. Below we provide our conjectural solution. We do not offer a theoretical justification here, as it is well beyond the scope of this paper. In particular, one would need to confirm that all angles can be realized and that conjugate and reducible actions are properly accounted for. We provide these trees as motivation for the reader to pursue these investigations more fully. 

The reader should be aware that for these knots there are incompressible surfaces of two types, Type III and Type II, per \cite{HO}. The Type III surfaces below may be constructed as state surfaces, as in the current paper. The Type II surfaces are not state surfaces.

For clarity, we have drawn trees for the case $(p,q,r) = (3,5,7)$, and we trust the general case is easily understood from this example. The weights in the conjecture are given for arbitrary odd positive values of $p$, $q$ and $r$.

\pagebreak

\begin{conjecture}
The weights for the nine incompressible surfaces for the $(p,q,r)$-pretzel knot, where $p$, $q$, and $r$ are odd and greater than 1, may be computed using trees completely analogous to those shown in Figures~\ref{fig:pretzelstate} and ~\ref{fig:pretzelnonstate} for the case $(p,q,r) = (3,5,7)$. The surfaces have weights as follows, following the order shown in Figures~\ref{fig:pretzelstate} and ~\ref{fig:pretzelnonstate}:
\begin{list}{--}{}  
    \item[Type III surface, 
    Weight $1/2(p-1)(q-1)(r-1)$] 
      \item[Type III surface, 
      Weight $1/2(r-1)$ ] 
     \item[Type III surface, 
     Weight $1/2(q-1)$ ] 
     \item[Type III surface, 
     Weight $1/2(p-1)$ ] 
      \item[Type III surface, 
      Weight $1$.] 
      \item[Type II surface, 
      Weight $1/2(pqr - 1)$] 
       \item[Type II surface, 
      Weight $1/2(qr-1)$ ] 
        \item[Type II surface, 
        Weight $1/2(pr-1)$ ] 
      \item[Type II surface, 
       Weight $1/2(pq-1)$]     
\end{list}
\end{conjecture}

\begin{figure}

\centering

\includegraphics[width=13cm, height=20cm]{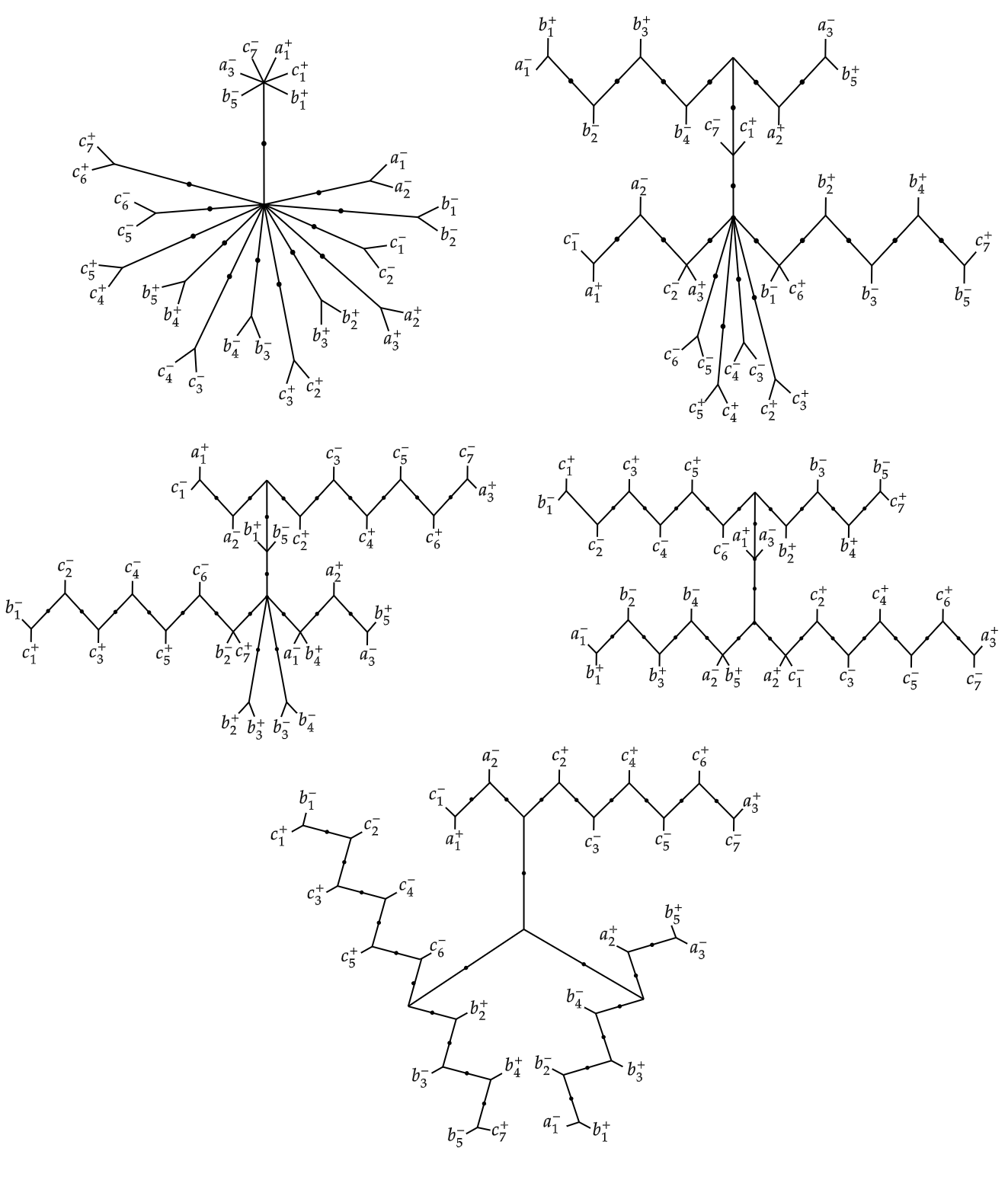}
\caption{Conjectural trees for Type III (state) surfaces for the (3,5,7) pretzel knot, slopes 0, -16, -20, -24, -30}
\label{fig:pretzelstate}
    
\end{figure}

\begin{figure}
    \centering

\includegraphics[width=13cm, height=20cm]{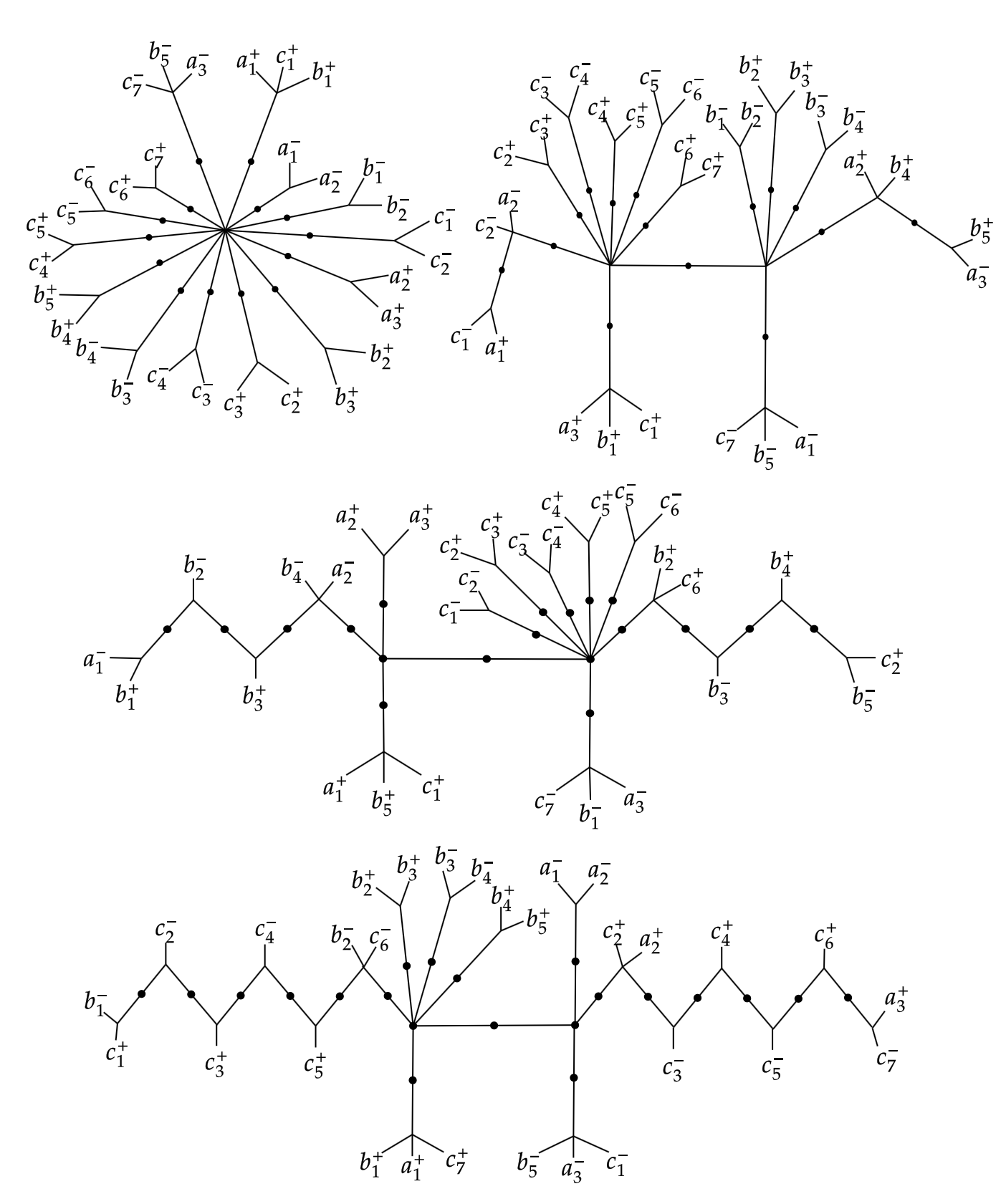}
\caption{Conjectural trees for Type II (non-state) surfaces for the (3,5,7) pretzel knot, slopes 0, -4, -8, -12}
\label{fig:pretzelnonstate}
\end{figure}

\bigskip
\noindent
{\it Acknowledgements.} We extend our thanks to Andrew Clifford for his conversations throughout this project and on the writing of this paper as well as for his careful reading of the undergraduate thesis of the third author.

 \pagebreak

\end{document}